    \theoremstyle{definition}
    \newtheorem{definition}{Definition}[section]
    \newtheorem{remark}{Remark}[section]
    \theoremstyle{plain}
    \newtheorem{theorem}{Theorem}[section]
    \newtheorem{proposition}[theorem]{Proposition}
    \newtheorem{lemma}[theorem]{Lemma}
    \newtheorem{corollary}[theorem]{Corollary}
    \newcommand{\beq}{\begin{eqnarray}}
    \newcommand{\eeq}{\end{eqnarray}}
    \newcommand{\bal}{\begin{align*}}
    \newcommand{\eal}{\end{align*}}
    \newcommand{\be}{\begin{equation}}
    \newcommand{\ee}{\end{equation}}
    \newcommand{\ben}{\begin{equation*}}
    \newcommand{\een}{\end{equation*}}
    \newcommand{\bl}{\begin{lemma}}
    \newcommand{\el}{\end{lemma}}
    \newcommand{\br}{\begin{remark}}
    \newcommand{\er}{\end{remark}}
    \newcommand{\bt}{\begin{theorem}}
    \newcommand{\et}{\end{theorem}}
    \newcommand{\bd}{\begin{definition}}
    \newcommand{\ed}{\end{definition}}
    \newcommand{\bind}{\begin{induction}}
    \newcommand{\eind}{\end{induction}}
    \newcommand{\bp}{\begin{proposition}}
    \newcommand{\ep}{\end{proposition}}
    \newcommand{\bc}{\begin{corollary}}
    \newcommand{\ec}{\end{corollary}}
    \newcommand{\bpr}{\begin{proof}}
    \newcommand{\epr}{\end{proof}}
    \newcommand{\bi}{\begin{itemize}}
    \newcommand{\ei}{\end{itemize}}
    \newcommand{\bsp}{\begin{split}}
    \newcommand{\esp}{\end{split}}
    \newcommand{\dd}{\mathop{}\!\mathrm{d}}
    \newcommand{\nn}{\nonumber}
    \newcommand{\caI}{{\mathcal I}}
    \newcommand{\scrM}{{\mathscr M}}
    \newcommand{\scrN}{{\mathscr N}}
    \newcommand{\Var}{\mathrm{Var}}
    \newcommand{\Cov}{\mathrm{Cov}}
    \newcommand{\E}{\mathbb{E}}
    \newcommand{\p}{\mathbb{P}}
    \newcommand{\Om}{\Omega}
\title{Stationary fluctuations of run-and-tumble particles}
\author{Frank Redig\footnote{Delft Institute of Applied Mathematics, TU Delft, Delft, The Netherlands. f.h.j.redig@tudelft.nl}, Hidde van Wiechen\footnote{Delft Institute of Applied Mathematics, TU Delft, Delft, The Netherlands. h.vanwiechen@tudelft.nl}}
\date{\today}
\begin{document}

\maketitle
\begin{center}
    \section*{Abstract}
    \end{center}

\begin{changemargin}{10mm}{10mm}
We study the stationary fluctuations of independent run-and-tumble particles.
We prove that the joint densities of particles with given internal state converges to an infinite dimensional Ornstein-Uhlenbeck process.
We also consider an interacting case, where the particles are subjected to exclusion.
We then study the fluctuations of the total density, which is a non-Markovian Gaussian process, and obtain its covariance in closed form.
By considering small noise limits of this non-Markovian Gaussian process, we obtain in a concrete example a large deviation rate function containing memory terms.
\end{changemargin}
\section{Introduction}
In this paper we consider a system of independent run-and-tumble particles on ${\mathbb Z}$ and study the stationary fluctuations of its empirical distribution.
Because particles have positions and internal states (which determine the direction in which they move and/or their rate of hopping over lattice edges), the hydrodynamic limit is a system of linear reaction-diffusion equations, describing the macroscopic joint evolution of the densities of particles with a given internal state.
In this sense, the paper can be viewed as a study of macroscopic properties of the multi-layer particle systems which we studied in \cite{Wiechen}.
The study of hydrodynamic limits and fluctuations around the hydrodynamic limit for particles with internal states, or alternatively, multi-layer systems is quite recent, and to our knowledge at present only a limited set of results is known: see \cite{erignoux}, \cite{floreani}, \cite{saada}, \cite{sasada2010hydrodynamic}.

Our interest in multi-layer systems is motivated  from the study  of active particles (see e.g. \cite{maes}), the study of double diffusivity models (see e.g. \cite{floreani} and references therein), and finally the study of particle systems described macrosopically by equations containing memory terms.
In this paper we consider multi-layer systems in which duality can be applied. Duality is a powerful tool which reduces the study of the hydrodynamic limit to the scaling limit of a single (dual) particle, and as we show in this paper (see Section \ref{stationary covariance} below) also determines uniquely the covariance of the stationary fluctuations of the empirical density of particles. Provided one can show that the stationary fluctuations converge to a  Gaussian limiting (distribution-valued) process, this limiting covariance uniquely determines the limiting stationary Gaussian process.

In our paper we prove that the fluctuation fields of the densities of particles with given internal state converge to a system of stochastic partial differential equations. In these limiting equations,  the drift is determined by the hydrodynamic limit, whereas the noise has both a conservative part coming from the transport of particles with a given internal state as well as a non-conservative part coming from the flipping of internal states.
We first deal with a system of independent particles, which has a simple dual consisting of independent particles with reversed velocities.
Next we indicate how to deal with interacting particles such as layered exclusion processes, where still duality can be used.

One of our motivations of studying fluctuation fields of particles with internal states is to understand fluctuation properties of the total density, i.e., disregarding the internal states of the particles.
The configuration which gives at each site the total number of particles is one of the simplest examples of a non-Markovian interacting particle system. The study of the hydrodynamic limit, fluctuations and large deviations around the hydrodynamic limit for non-Markovian particle systems is largely terra incognita.
Therefore, we believe that simple examples in which one can have some grip on the explicit form of fluctuations and large deviations are important to obtain.

In our setting, we prove that the fluctuations of the total density of particles converges to a Gaussian distribution-valued process which satisfies a non-Markovian SPDE.
We provide a concrete example where we can explicitly characterize the large deviations of the limiting SPDE in the small noise limit.
These large deviations give an indication of the  large deviations of the total density of particles. The latter can of course also be obtained via a contraction principle from the large deviations of the joint densities of particles
with a given internal state. However,  the large deviation rate function obtained via this contraction principle  is very implicit, and therefore in this paper we preferred not to follow this road in order to obtain an explicit form of the memory terms of the rate function.

The rest of our paper is organized as follows. In Section \ref{section 2} we introduce the run-and-tumble particle model and state preliminary results on ergodic measures, duality and hydrodynamic limit, the latter of which will be proven in the appendix \ref{appendix}. In Section \ref{section 3} we state the main result on stationary fluctuations for independent particles, Theorem \ref{fluctuations}, provide a direct proof of the limiting covariance in Section \ref{stationary covariance},  and consider an interacting case, namely a multi-layer version of the symmetric exclusion process, in Section \ref{4.3}. In Section \ref{section 5} we study the hydrodynamic limit and the fluctuations of the total density of particles, and prove a large deviations result for the limiting fluctuation process in a particular case.
In Section \ref{section 4} we prove the Theorem \ref{fluctuations}.

\section{Basic notations and definitions}\label{section 2}

In this paper we will look at the run-and-tumble particle process, which is a process designed to model active particles.  Let $V := \mathbb{Z}\times S$, with $S\subset \mathbb{Z}$ a finite set. The set $V$ is the state space of a single run-and-tumble particle. We see elements $v=(x,\sigma) \in V$ as particles with position $x \in \mathbb{Z}$ and internal state $\sigma \in S$. The dynamics of a single run-and-tumble particle are now as follows
\begin{itemize}
    \item[i.] At rate $\kappa N^2$ the particle performs a nearest neighbor jump, i.e., $(x,\sigma) \to (x\pm 1,\sigma)$
    \item[ii.] At rate $\lambda N$ the particle performs an active jump in the direction of its internal state, i.e., $(x,\sigma) \to (x+\sigma,\sigma)$.
    \item[iii.] At rate $c(\sigma,\sigma')$ the particle changes its internal state from $\sigma$ to $\sigma'$, i.e. $(x,\sigma) \to (x,\sigma')$. Here we assume that the rates $\left\{c(\sigma,\sigma') : \sigma,\sigma' \in S \right\}$ are irreducible and symmetric, i.e., $c(\sigma,\sigma') = c(\sigma',\sigma)$.
\end{itemize}
The run-and-tumble particle process is the process of configurations consisting of independent run-and-tumble particles. More precisely it is a Markov process $\{\eta_t:t\geq 0\}$ on the state space $\Om := \mathbb{N}^V$ consisting of independent random walkers on $V$ where every particle has the dynamics as described above.

From the dynamics we can write down the  following generator $L_N$ acting on local functions, i.e., functions $f: \Omega \to \mathbb{R}$ which only depend on a finite number of sites in $V$.
\begin{equation}\label{GEN}\begin{split}
    L_Nf(\eta) &=\kappa N^2 \sum_{(x,\sigma)\in V} \eta(x,\sigma) \left(f\big(\eta^{(x,\sigma)\to(x+1,\sigma)}\big)+f\big(\eta^{(x,\sigma)\to(x-1,\sigma)}\big)-2f(\eta)\right)\\
    &\ \ \ \ + \lambda N\sum_{(x,\sigma)\in V} \eta(x,\sigma) \left(f\big(\eta^{(x,\sigma) \to (x+\sigma,\sigma)}\big)-f(\eta)\right)\\
    &\ \ \ \ +   \sum_{(x,\sigma) \in V} \sum_{\sigma'\in S} \eta(x,\sigma)c(\sigma,\sigma') \left(f\big(\eta^{(x,\sigma)\to(x,\sigma')}\big) - f(\eta)\right).
\end{split}\end{equation}
Here $\eta(x,\sigma)$ denotes the number of particles at site $(x,\sigma)\in V$ in the configuration $\eta$, and  $\eta^{(x,\sigma)\to (y,\sigma')}$ denotes the configuration $\eta$ where a single particle has moved from $(x,\sigma)$ to $(y,\sigma')$.

With this choice of scaling, in the macroscopic limit, the densities of particles with a given internal state satisfy a system of linear reaction-diffusion equations (see section 5.1 below for the explicit form). Equivalently, one can view the choice of scaling as a diffusive time scale ($t\to N^2 t$), a weak asymmetry (active jumps in the direction of the velocity occur at rate $N= N^{-1} N^2$), and a slow reaction term (changes of internal state happen at rate $1=N^{-2} N^2$.
The scaling is also such that the motion of a single particle converges to a multi-layer Brownian motion with layer-dependent drift (cf. section 2.1 below).

\subsection{Scaling limit of the single particle dynamics}

We will  denote by $\mathscr{L}_N$ the Markov generator of a single run-and-tumble particle (rescaled in space),
more precisely, the generator of the process $(\tfrac{X_t}{N}, \sigma_t)$ where $X_t$ denotes the position and $\sigma_t$ the internal state of the particle.

This generator acts on a core consisting of test functions on the space $\mathbb{R}\times S$, which we denote by $C_{c,S}^\infty$, and which is  defined via
\[
C_{c,S}^\infty := \left\{\phi:\mathbb{R}\times S \to \mathbb{R} : \phi(\cdot,\sigma) \in C_c^\infty(\mathbb{R}) \text{ for all $\sigma \in S$} \right\}.
\]
The generator $\mathscr{L}_N$ then reads as follows:
\begin{align*}
\mathscr{L}_N \phi(x,\sigma) &= \kappa N^2 (\phi(x+\tfrac{1}{N},\sigma) + \phi(x-\tfrac{1}{N},\sigma) -2\phi(x,\sigma)) + \lambda N (\phi(x+\tfrac{\sigma}{N},\sigma) - \phi(x,\sigma))\\
&\ \ \ \ + \sum_{\sigma' \in S} c(\sigma,\sigma') (\phi(x,\sigma') - \phi(x,\sigma)).
\end{align*}
Corresponding to this generator we have  the corresponding Markov semigroup which we denote by $S^N_t$. Via Taylor approximation we obtain that $\mathscr{L}_N \phi \to A\phi$ uniformly as $N\to\infty$, where $A$ is the differential operator given by
\begin{equation}\label{A}
A\phi(x,\sigma) =   \left(\tfrac{\kappa}{2}\partial_{xx} +\sigma \lambda \partial_x\right)\phi(x,\sigma) + \sum_{\sigma' \in S}c(\sigma,\sigma') \big(\phi(x,\sigma') - \phi(x,\sigma)\big).
\end{equation}
Because $A$ generates a Markov semigroup as well, as a consequence of the convergence of the generators we can also obtain $S^N_t\phi \to e^{tA}\phi$ uniformly for all $\phi \in C_{0,S}$, i.e., the functions space consisting of functions $\phi:\mathbb{R}\times S\to \mathbb{R}$ such that $\phi(\cdot,\sigma) \in C_0(\mathbb{R})$ for all $\sigma \in S$.

The operator $A$ above is also an operator on (a subset of) the Hilbert space $L^2(\dd x\times |\cdot|_S)$, where $|\cdot|_S$ is the counting measure over $S$. The  inner product on this Hilbert space, denoted by $\left<\left<\cdot,\cdot\right>\right>$, is the following
\begin{equation}\label{innerprod}
\left<\left<\phi,\psi\right>\right> := \sum_{\sigma \in S} \int_\mathbb{R} \phi(x,\sigma)\psi(x,\sigma)\dd x.
\end{equation}
Later on we will need   the adjoint of the operator $A$ with respect to this inner product, which acts on $\phi\in C_{c,S}^\infty$ as follows:
\begin{equation}\label{A*}
A^*\phi(x,\sigma) =   \left(\tfrac{\kappa}{2}\partial_{xx} -\sigma \lambda \partial_x\right)\phi(x,\sigma) + \sum_{\sigma' \in S}c(\sigma,\sigma') \big(\phi(x,\sigma') - \phi(x,\sigma)\big).
\end{equation}

\subsection{Basic properties of independent run-and-tumble particles}
Before we state the theorem of the stationary fluctuations, we first review a few  known results on run-and-tumble particles which we need.
\subsubsection{Stationary ergodic product measures}
We define the measures $\mu_\rho$, with $\rho \in [0,\infty)$, as the product Poisson measure with density $\rho$, i.e.
\begin{equation*}
    \mu_\rho := \bigotimes_{(x,\sigma)\in V} \mathrm{Pois}(\rho).
\end{equation*}
In \cite{Wiechen} it is proved  that these measures   are stationary and ergodic with respect for run-and-tumble particle process $\{\eta_t: t\geq0 \}$.  For this reason, when we study the stationary fluctuations of the densities of particles with given internal state, we will start the process $\{\eta_t: t\geq 0\}$ from the measure $\mu_\rho$.

\subsubsection{Duality}\label{duality sec}
\begin{definition}
We say that two Markov processes $\{\eta_t : t\geq0\}$ and $\{\xi_t : t\geq0\}$,  on the state spaces $\Om$ and $\Om'$ respectively, are $\emph{dual}$ to one another with respect to a duality function  $D:\Om\times \Om' \to \mathbb{R}$  if
\begin{equation}\label{duality def}
    \E_\eta \left[D(\xi,\eta_t)\right] = \widehat{\E}_\xi \left[D(\xi_t,\eta)\right] < \infty,
\end{equation}
where $\E_\eta$ denotes the expectation in $\{\eta_t :  t \geq 0\}$ starting from $\eta$ and  $\widehat{\E}_\xi$ the expectation in the dual process $\{\xi_t :  t\geq 0\}$ starting from $\xi$.
\end{definition}
In \cite{Wiechen} it is proved that the run-and-tumble particle process is dual to its time-reversed process where the active jumps are in the reverse direction, i.e., the process corresponding to the following generator
\begin{equation*}\begin{split}
    \widehat{L}_Nf(\eta) &=\kappa N^2 \sum_{(x,\sigma)\in V} \eta(x,\sigma) \left(f\big(\eta^{(x,\sigma)\to(x+1,\sigma)}\big)+f\big(\eta^{(x,\sigma)\to(x-1,\sigma)}\big)-2f(\eta)\right)\\
    &\ \ \ \ + \lambda N\sum_{(x,\sigma)\in V} \eta(x,\sigma) \left(f\big(\eta^{(x,\sigma) \to (x-\sigma,\sigma)}\big)-f(\eta)\right)\\
    &\ \ \ \ +   \sum_{(x,\sigma) \in V} \sum_{\sigma'\in S} \eta(x,\sigma)c(\sigma,\sigma') \left(f\big(\eta^{(x,\sigma)\to(x,\sigma')}\big) - f(\eta)\right).
\end{split}\end{equation*}
The duality function is then given by
\begin{equation*}
    D(\xi,\eta) = \prod_{(x,\sigma) \in V} \frac{\eta(x,\sigma)!}{\xi(x,\sigma)!(\eta(x,\sigma) - \xi(x,\sigma))!}\cdot I\big(\xi(x,\sigma) \leq \eta(x,\sigma)\big),
\end{equation*}
where $I$ denotes the indicator function, and where $\xi$ is assumed to be a finite configuration, i.e.,
\[
\sum_{(x,\sigma)} \xi(x,\sigma) <\infty
\]

In our paper we will mostly need this duality relation in the form of duality with a single dual particle, i.e.,
\[
\E_\eta[\eta_t(x,\sigma)] = \widehat{\E}_{(x,\sigma)}[\eta(\widehat{X}_t,\widehat{\sigma}_t)],
\]
where  $(\tfrac{\widehat{X}_t}{N},\widehat{\sigma}_t)$ is the process corresponding to the (time-reversed) generator $\widehat{\mathscr{L}}_N$ given by
\begin{align*}
\widehat{\mathscr{L}}_N \phi(x,\sigma) &= \kappa N^2 (\phi(x+\tfrac{1}{N},\sigma) + \phi(x-\tfrac{1}{N},\sigma) -2\phi(x,\sigma)) + \lambda N (\phi(x-\tfrac{\sigma}{N},\sigma) - \phi(x,\sigma))\\
&\ \ \ \ + \sum_{\sigma' \in S} c(\sigma,\sigma') (\phi(x,\sigma') - \phi(x,\sigma)).
\end{align*}
We denote the corresponding Markov semigroup of this process as $\widehat{S}_t^N$. By a Taylor expansion, we obtain that $\widehat{\mathscr{L}}_N\phi \to A^*\phi$, with $A^*$ defined as in \eqref{A*}, uniformly in $N$ for all $\phi \in C_{c,S}^\infty$,  and therefore we are able to write  for all $\phi \in C_{0,S}$ that  $\widehat{S}^N_t\phi \to e^{tA^*}\phi$ uniformly.

\subsection{Hydrodynamic limit}
In this section we will briefly mention the hydrodynamic limit of the run-and-tumble particle process.
For the proof, which follows standard methodology, we refer to the appendix.

Given a function $\rho:\mathbb{R}\times S\to \mathbb{R}$ such that $\rho(\cdot,\sigma) \in C_b^2(\mathbb{R})$ for all $\sigma \in S$, we start by defining the product Poisson measures $\mu_\rho^N$ for every $N\in\mathbb{N}$ as follows
\begin{equation}\label{loceq}
\mu_\rho^N := \bigotimes_{(x,\sigma) \in V} \mathrm{Pois}\big(\rho(\tfrac{x}{N},\sigma)\big).
\end{equation}
This is the local equilibrium distribution corresponding to the macroscopic profile $\rho$.

Furthermore, for every $N\in\mathbb{N}$, the process $\{\eta_t^N:t\geq0\}$ is the run-and-tumble particle process started from $\eta_0^N \sim \mu_\rho^N$.
We can now define the empirical measures of the process, denoted by  $\pi^N = \left\{\pi_{t}^N : t\geq 0\right\}$,  as follows
\begin{equation}\label{empi}
    \pi^N_{t} := \frac{1}{N}\sum_{(x,\sigma)\in V} \eta^N_t(x,\sigma)\delta_{(\tfrac{x}{N},\sigma)},
\end{equation}
where $\delta$ is the dirac measure.
We think of
$\pi^N_{t}$ as the macroscopic profile corresponding to the microscopic configuration $\eta_t$.
In the rhs of \eqref{empi} every particle of type $\sigma$ contributes a mass $1/N$ at the ``macro spatial location'' $x/N$.

For every $t\geq0$, $\pi^N_{t}$ is a positive measure on $\mathbb{R}\times S$ such that when paired with a test function $\phi \in C_{c,S}^\infty$ we obtain
\begin{equation*}
    \pi^N_{t}(\phi) := \left<\phi, \pi^N_{t}\right> = \frac{1}{N} \sum_{(x,\sigma)\in V} \eta^N_t(x,\sigma) \phi(\tfrac{x}{N},\sigma).
\end{equation*}
By the choice of the initial distribution, we have at time $t=0$ zero that
\[
\pi^N_0(\phi)\to \int \rho(x,\sigma) \phi(x,\sigma) dx
\]
We then have the following result for the hydrodynamic limit.
\begin{theorem}\label{hydrodynamic limit}
For every $t\geq0$, $\varepsilon>0$ and $\phi \in C_{c,S}^\infty$, we have that
\begin{equation*}
    \lim_{N\to\infty} \p\left(\left|\pi_{t}^N(\phi) - \sum_{\sigma \in S}\int \rho_t(x,\sigma ) \phi(x,\sigma)dx\right| >\varepsilon\right)=0,
\end{equation*}
where $\rho_t(x,\sigma)$ solves the PDE  $\dot{\rho}_t = A^*\rho_t$ with initial condition $\rho_0(x,\sigma) = \rho(x,\sigma)$.
\end{theorem}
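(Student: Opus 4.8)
The plan is to establish convergence in probability at the fixed time $t$ through a standard two-moment estimate: it suffices to prove that $\E[\pi_t^N(\phi)] \to \sum_\sigma \int \rho_t(x,\sigma)\phi(x,\sigma)\dd x$ and that $\Var(\pi_t^N(\phi)) \to 0$, since Chebyshev's inequality $\p(|\pi_t^N(\phi) - \E[\pi_t^N(\phi)]| > \varepsilon) \le \Var(\pi_t^N(\phi))/\varepsilon^2$ then yields the claim. Because the statement concerns a single fixed time rather than the whole trajectory, no path-space tightness argument is needed.

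For the mean I would use the single-particle duality. Introducing the discrete pairing $\langle f, g\rangle_N := \frac 1N \sum_{(x,\sigma)\in V} f(x/N,\sigma) g(x/N,\sigma)$ and averaging the duality identity $\E_\eta[\eta_t(x,\sigma)] = \widehat{\E}_{(x,\sigma)}[\eta(\widehat{X}_t,\widehat{\sigma}_t)]$ over $\eta \sim \mu_\rho^N$, so that $\E_{\mu_\rho^N}[\eta(y,\sigma')] = \rho(y/N,\sigma')$, one obtains $\E_{\mu_\rho^N}[\eta_t(x,\sigma)] = (\widehat{S}_t^N \rho)(x/N,\sigma)$ and hence $\E[\pi_t^N(\phi)] = \langle \widehat{S}_t^N \rho, \phi\rangle_N$. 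Because $\widehat{\mathscr{L}}_N$ is the adjoint of $\mathscr{L}_N$ with respect to the counting measure (the reversed active drift is the adjoint of the forward one, while the symmetric diffusion and the symmetric flip rates $c(\sigma,\sigma')$ are self-adjoint), the semigroup can be transferred onto the test function: $\E[\pi_t^N(\phi)] = \langle \rho, S_t^N\phi\rangle_N$. This is the convenient form, since $\phi \in C_{c,S}^\infty$ and $S_t^N\phi \to e^{tA}\phi$ uniformly with $e^{tA}\phi \in C_{0,S}$; combined with uniform-in-$N$ (Gaussian-type) tail bounds on $S_t^N\phi$ the Riemann sum passes to the limit, and using that $A^*$ is the adjoint of $A$ one gets $\langle \rho, S_t^N\phi\rangle_N \to \left<\left<\rho, e^{tA}\phi\right>\right> = \left<\left<e^{tA^*}\rho,\phi\right>\right> = \sum_\sigma \int \rho_t(x,\sigma)\phi(x,\sigma)\dd x$.

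For the variance I would exploit that independent particles preserve the product-Poisson law: since every particle evolves independently on $V$ and $\eta_0 \sim \mu_\rho^N$ is product Poisson, the thinning and superposition properties of Poisson processes keep the occupation numbers $\{\eta_t(x,\sigma)\}$ independent and Poisson, with mean $(\widehat{S}_t^N\rho)(x/N,\sigma) \le \|\rho\|_\infty$. The covariance matrix is then diagonal with entries equal to the means, so $\Var(\pi_t^N(\phi)) = \frac 1{N^2}\sum_{(x,\sigma)} \E[\eta_t(x,\sigma)]\phi(x/N,\sigma)^2 \le \frac{\|\rho\|_\infty}{N}\cdot \frac 1N\sum_{(x,\sigma)}\phi(x/N,\sigma)^2 = O(1/N)$, the remaining Riemann sum being bounded since $\phi$ has compact support. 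I expect the only genuine obstacle to lie in the first-moment step, namely the clean justification of transferring the semigroup onto $\phi$ together with the Riemann-sum limit; the analytic inputs are the adjoint relation between $\mathscr{L}_N,\widehat{\mathscr{L}}_N$ and $A,A^*$, the uniform convergence $S_t^N\phi \to e^{tA}\phi$, and a uniform-in-$N$ decay estimate on $S_t^N\phi$ ensuring that pairing the limit against the merely bounded profile $\rho$ is legitimate. Once both moment estimates are in hand, Chebyshev's inequality closes the argument.
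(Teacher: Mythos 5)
Your proposal is correct, but it takes a genuinely different route from the paper. The paper never proves Theorem \ref{hydrodynamic limit} directly: it deduces it as a corollary of the stronger Theorem \ref{weak solution}, proved in the appendix, which gives weak convergence of the whole trajectory $\pi^N$ to the deterministic path $\pi$ in $D([0,T];\textbf{M})$ via the Dynkin martingale decomposition, duality-based moment bounds (Lemma \ref{ineqs}), the vanishing of the martingale term (Lemma \ref{vanish martingale}), tightness in path space (Proposition \ref{tight probs}), and a Portmanteau/closed-set identification of the limit points. You instead prove the fixed-time statement by a two-moment Chebyshev argument: one-particle duality gives $\E[\pi_t^N(\phi)] = \langle \widehat{S}_t^N\rho,\phi\rangle_N = \langle\rho, S_t^N\phi\rangle_N$, and the preservation of the product-Poisson law under independent motion (the displacement theorem for Poisson random fields) makes the variance $O(1/N)$. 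Your transfer of the semigroup onto $\phi$ is indeed the right move, since the paper's semigroup convergence $S_t^N\phi\to e^{tA}\phi$ is stated only for functions vanishing at infinity, which the profile $\rho\in C_b^2$ need not be. What your route buys: it is shorter, needs no tightness or path-space machinery, and isolates exactly where independence enters. What the paper's route buys: the trajectory-level statement (the form actually relevant for the rest of the paper), and a method that survives when interaction destroys the product structure --- for the multi-layer SEP of Section \ref{4.3} your variance step would have to be replaced by a two-dual-particle computation in the spirit of Lemma \ref{ineqs}. Two points you flag should be written out to make the proof complete, though both are routine: (i) the uniform-in-$N$ tail bound on $S_t^N\phi$ needed to pass the Riemann sum over the infinite lattice to the limit against the merely bounded $\rho$ --- a Chebyshev bound on the single-particle displacement (drift of order $\lambda t$, variance of order $\kappa t + \lambda t/N$) gives $|S_t^N\phi(\tfrac{x}{N},\sigma)| \le C(1+|x|/N)^{-2}$ uniformly in $N$, which suffices to split the sum into a compact part (where uniform convergence applies) and a uniformly small tail; (ii) the adjointness $\langle \widehat{S}_t^N\rho,\phi\rangle_N = \langle\rho,S_t^N\phi\rangle_N$ with respect to counting measure, which holds because the reversed active jumps are precisely the counting-measure adjoints of the forward ones, while the diffusive and flip parts are symmetric.
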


This results is actually a corollary of an stronger theorem which shows convergence of the trajectories $\pi^N$ in the path space $D([0,T];\textbf{M})$ equipped with the Skorokhod topology, where $\textbf{M}$ is the space of Radon measures on $\mathbb{R}\times S$.
Let $\pi = \{\pi_t:t\geq 0\}$ denote the trajectory of measures on $\mathbb{R}\times S$ such that for all $t\geq0, \phi\in C_{c,S}^\infty$ we have that $\left<\phi, \pi_t\right> = \left<\left<\phi, \rho_t\right>\right>$, where $\rho_t$ solves the PDE in the above theorem. The trajectory $\pi$  is then the unique continuous path in $D([0,T];\textbf{M})$ such that for all $\phi \in C_{c,S}^\infty$
\begin{equation}\label{pi}
    \scrM^{\phi}_t(\pi)= \pi_t(\phi) - \pi_0(\phi) - \int_0^t \pi_s(A\phi)\dd s=0.
\end{equation}

\begin{theorem}\label{weak solution}
For any $N\in\mathbb{N}$, let $P^N$ be the law of the process $\pi^N$. Then $P^N \to \delta_{\pi}$ weakly in $D([0,T]; \emph{\textbf{M}})$ for any $T>0$, with $\pi$ the unique continuous path solving \eqref{pi}.
\end{theorem}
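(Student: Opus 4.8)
The plan is to prove Theorem \ref{weak solution} by the standard route for hydrodynamic limits: establish tightness of the laws $P^N$ in $D([0,T];\textbf{M})$, and then characterize every subsequential limit as being concentrated on the unique path $\pi$ solving \eqref{pi}. Since Theorem \ref{hydrodynamic limit} already pins down the one-time marginals, the real content here is the trajectorial upgrade. First I would fix a test function $\phi \in C_{c,S}^\infty$ and reduce the problem to the real-valued processes $t \mapsto \pi_t^N(\phi)$; by a separability argument over a countable dense family of test functions, tightness of $P^N$ in the measure-valued Skorokhod space follows from tightness of each of these real-valued coordinate processes (this is the Jakubowski / Mitoma criterion). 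For each $\phi$ I would verify an Aldous-type tightness condition, controlling the modulus of continuity of $t\mapsto \pi_t^N(\phi)$.

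The engine for both the tightness estimates and the identification of the limit is Dynkin's formula. For the empirical field paired with $\phi$, one writes
\begin{equation}\label{dynkin}
M_t^{N,\phi} := \pi_t^N(\phi) - \pi_0^N(\phi) - \int_0^t L_N\big(\pi_s^N(\phi)\big)\dd s
\end{equation}
as a martingale, and the key computation is that $L_N$ acting on the linear functional $\eta \mapsto \frac1N\sum_{(x,\sigma)}\eta(x,\sigma)\phi(\tfrac xN,\sigma)$ produces exactly $\pi_s^N(\mathscr{L}_N\phi)$, because the generator acts on each independent particle through the single-particle generator. Then I would invoke the uniform convergence $\mathscr{L}_N\phi \to A\phi$ established in Section 2.1 to replace $\pi_s^N(\mathscr{L}_N\phi)$ by $\pi_s^N(A\phi)$ up to a vanishing error. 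Thus \eqref{dynkin} becomes $\pi_t^N(\phi)-\pi_0^N(\phi)-\int_0^t \pi_s^N(A\phi)\dd s + o(1)$, which is precisely the discrete analogue of $\scrM_t^\phi(\pi)$ in \eqref{pi}.

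To close the tightness argument and the identification, I would show that the martingale part $M_t^{N,\phi}$ vanishes in $L^2$ as $N\to\infty$. This is where the scaling is essential: computing the predictable quadratic variation $\langle M^{N,\phi}\rangle_t$ via the carré-du-champ $\Gamma_N(\phi) = L_N(\phi^2)-2\phi L_N\phi$ gives a sum of squared discrete gradients of $\phi$, each of order $N^{-2}$, times rates of order $\kappa N^2$ (diffusive), $\lambda N$ (active), and $O(1)$ (flipping); the prefactor $1/N^2$ from the $\pi^N$ normalization combined with the product-Poisson bound $\E_{\mu_\rho^N}[\eta_0(x,\sigma)]=\rho(\tfrac xN,\sigma)$ yields $\E[\langle M^{N,\phi}\rangle_t] = O(1/N)\to 0$. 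Hence any limit point is concentrated on paths satisfying $\scrM_t^\phi(\pi)=0$ for all $\phi$ in the countable dense family, and by continuity for all $\phi\in C_{c,S}^\infty$. Finally I would appeal to the uniqueness of the continuous solution of \eqref{pi} — which is the well-posedness of the linear PDE $\dot\rho_t = A^*\rho_t$ tested against $\phi$, i.e. $\tfrac{d}{dt}\langle\phi,\pi_t\rangle = \langle A\phi,\pi_t\rangle$ — to conclude that all subsequential limits coincide with $\delta_\pi$, giving full convergence $P^N\to\delta_\pi$.

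The main obstacle I anticipate is not any single estimate but the noncompactness of the spatial domain $\mathbb{R}$: Radon measures on $\mathbb{R}\times S$ do not form a nice Polish space under the weak topology the way probability measures on a compact space do, so one must be careful that the vague/Radon topology on $\textbf{M}$ interacts correctly with the Skorokhod structure and that no mass escapes to infinity. Controlling tightness uniformly in $N$ against the full family $C_{c,S}^\infty$, and ensuring the limiting martingale problem has a genuinely \emph{unique} continuous solution in this measure space (rather than merely matching fixed-time marginals), is the delicate part; the compact support of the test functions and the contractivity of the dual semigroup $e^{tA^*}$ are the tools I would lean on to handle it.
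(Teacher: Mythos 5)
Your proposal follows the same core strategy as the paper's appendix proof: the Dynkin martingale for $\pi_t^N(\phi)$, the observation that $L_N$ acts through the single-particle generator so that $L_N\pi_s^N(\phi)=\pi_s^N(\mathscr{L}_N\phi)$, the replacement of $\mathscr{L}_N\phi$ by $A\phi$ up to a vanishing Taylor error (this is Proposition \ref{prop dynkin hydro}), the carr\'e du champ computation showing the martingale part vanishes in $L^2$ at rate $O(1/N)$ (this is Lemma \ref{vanish martingale}), and identification of subsequential limits via the martingale problem together with uniqueness of the continuous solution of \eqref{pi}. Where you genuinely differ is the tightness machinery: you would view $\pi^N$ as taking values in the nuclear dual $(C_{c,S}^\infty)^*$ and invoke Mitoma \cite{Mitoma} plus Aldous' criterion \cite{Aldous} to reduce to the scalar processes $\pi^N(\phi)$ --- which is precisely how the paper handles the fluctuation fields in Section \ref{Tigh} --- whereas the appendix follows Sepp\"al\"ainen \cite{timo} and stays inside $\textbf{M}$: compact containment (condition \ref{D1}) via the explicit vaguely compact sets $K_C$ and Markov's inequality, and a modulus-of-continuity bound (condition \ref{D2}) in a metric built from countably many test functions. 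Your route buys a cleaner reduction to one-dimensional tightness, but the limit is then a priori only distribution-valued; since the theorem asserts convergence in $D([0,T];\textbf{M})$, you must add the transfer step that weak-$*$ limits of positive measures are positive distributions, hence Radon measures by Riesz representation, and that on such paths the induced topology agrees with the vague one. The paper's choice of working directly in $\textbf{M}$ avoids this step.

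One concrete ingredient is missing from your outline: all of your estimates (the $O(1/N)$ bound on $\E[\langle M^{N,\phi}\rangle_t]$, and any mass/compact-containment control) require moment bounds on $\eta_s^N(x,\sigma)$ that are uniform over $s\in[0,T]$, $(x,\sigma)\in V$ and $N$, but you quote only the time-zero identity $\E_{\mu_\rho^N}[\eta_0(x,\sigma)]=\rho(\tfrac xN,\sigma)$. Unlike in Theorem \ref{fluctuations}, the process here is \emph{not} stationary: $\mu_\rho^N$ is a local equilibrium with a non-constant profile, so the time-zero bound does not trivially persist at positive times. This is exactly what the paper's Lemma \ref{ineqs} supplies, via one- and two-particle duality: $\E_{\mu_\rho^N}[\eta_t^N(x,\sigma)]=\widehat{\E}_{(x,\sigma)}\big[\rho(\tfrac{\widehat{X}_t}{N},\widehat{\sigma}_t)\big]\le \|\rho\|_\infty$, and likewise $\E_{\mu_\rho^N}[\eta_t^N(x,\sigma)^2]\le\|\rho\|_\infty^2+\|\rho\|_\infty$. (For independent particles one can alternatively use that product Poisson measures propagate in time with evolved intensity.) Note also that you should not lean on Theorem \ref{hydrodynamic limit} for the one-time marginals: in the paper it is a corollary of Theorem \ref{weak solution}, so using it would be circular --- fortunately your actual argument never needs it. With the uniform-in-time moment lemma added, your outline goes through.
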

For the sake of self-containedness, the  proof of Theorem \ref{weak solution} is provided in the appendix. The method of proof is standard and it follows Sepp\"al\"ainen, in \cite[Chapter 8]{timo}.

\subsection{Fluctuation fields}
For every $N\in\mathbb{N}$, we define the fluctuation field $Y^N := \{Y_{t}^N: t\geq 0\}$  as
\begin{equation}\label{fluctuation field}
    Y^N_{t} = \frac{1}{\sqrt{N}} \sum_{x \in \mathbb{Z}} \big(\eta_t(x,\sigma)- \rho\big)\delta_{(\tfrac{x}{N},\sigma)}.
\end{equation}
This process takes values in the space of distributions on $\mathbb{R}\times S$, denoted by $(C_{c,S}^\infty)^*$. We expect the fluctuation field $Y^N$ to converge weakly to a generalized stationary Ornstein-Uhlenbeck process. Before we can state the result we first recall some basic definitions of space-time white noise (see e.g. \cite{hida} for a detailed account).
\begin{definition}
A random distribution $\mathscr{W}$   is called a \emph{white noise} on $\mathbb{R}\times S$ if $\{\left<\phi, \mathscr{W}\right> : \phi \in C^\infty_{c,s}\}$ is jointly centered Gaussian with covariance
\[
\E[\left<\phi, \mathscr{W}\right> \left<\psi, \mathscr{W}\right>] = \left<\left<\phi,\psi\right>\right>.
\]
where $\langle\langle,\rangle\rangle$ denotes the inner product defined in \eqref{innerprod}.
We denote by $\dd \mathscr{W}_t$ the time-differential of space-time white noise. This object is such that when paired with a test function $\phi \in C_{c,S}^\infty$ and integrated over time gives a Brownian motion, i.e.,
\[
\int_0^t \left<\phi ,\dd \mathscr{W}_s\right> = B(\left<\left<\phi, \phi \right>\right>t),
\]
where $B(\cdot)$ is a standard Brownian motion on $\mathbb{R}$.
We denote by $\frac{\dd \mathscr{W}_t}{\dd t}$ the corresponding space-time white noise. This random space-time distribution is such that for all $\phi: [0,T]\times\mathbb{R}\times S\to\mathbb{R}$, with $\phi(t, \cdot)$ a test function
$\langle\phi, \frac{\dd \mathscr{W}_t}{\dd t}\rangle$ is jointly Gaussian with covariance
\[
\E\left[\left\langle\phi, \frac{\dd \mathscr{W}_t}{\dd t}\right\rangle \left\langle\psi, \frac{\dd \mathscr{W}_t}{\dd t}\right\rangle\right]=\int_0^T
\left\langle\left\langle\phi(t, \cdot), \psi(t, \cdot)\right\rangle\right\rangle \dd t.
\]

\end{definition}
\begin{remark}
Informally speaking, a white noise on $\mathbb{R}\times S$ is a Gaussian field
$W(x,\sigma)$ with covariance
$\delta(x-y)\delta_{\sigma,\sigma'}$, and a space-time white noise on $\mathbb{R}\times S$
is a Gaussian field
$W(t,x,\sigma)$ with covariance $\delta(t'-t)\delta(x-y)\delta_{\sigma,\sigma'}$.
\end{remark}

\section{Stationary fluctuations}\label{section 3}
We are now ready to state our result on stationary fluctuations. We start with the case of independent particles;  in Section \ref{4.3} below we will consider an interacting case.
\begin{theorem}\label{fluctuations}
Assume that $\eta_0$ is distributed according to the Poisson product measure $\mu_\rho$.
For every $N\in\mathbb{N}$, let $Q^N$ denote the law of the process $Y^N$ defined in
\eqref{fluctuation field}. Then $Q^N \to Q$ weakly in $D([0,T]; (C_{c,S}^\infty)^*)$ for any $T>0$, where $Q$ is the law of the stationary Gaussian process $Y$ satisfying the following SPDE
\begin{equation}\label{SPDE}
    \dd Y_t = A^*Y_t\dd t + \sqrt{2 \kappa\rho}\partial_{x}\dd \mathscr{W}_t + \sqrt{2 \rho\Sigma}\dd \mathscr{\tilde{W}}_t.
\end{equation}
Here $\dd \mathscr{W}_t$ and $\dd \mathscr{\tilde{W}}_t$ are two independent space-time white noises on the space $\mathbb{R}\times S$, and  $\Sigma$ is the operator working on test functions $\phi\in C_{c,S}^\infty$ as
\begin{equation}\label{Sigma}
(\Sigma\phi)(x,\sigma) = -\sum_{\sigma'\in S} c(\sigma,\sigma') \big(\phi(x,\sigma') - \phi(x,\sigma)\big).
\end{equation}
\end{theorem}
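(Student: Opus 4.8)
The plan is to follow the standard martingale route to fluctuation fields, as in Kipnis--Landim. Fix $\phi\in C^\infty_{c,S}$ and consider the real-valued process $Y^N_t(\phi)$ obtained by pairing \eqref{fluctuation field} with $\phi$. Since $Y^N_t(\phi)$ is a linear functional of the configuration, a direct computation shows that $L_N$ acts on it through the single-particle generator, namely $L_N Y^N_t(\phi)=Y^N_t(\mathscr{L}_N\phi)$: the constant correction coming from the subtracted density $\rho$ drops out because
\[
\sum_{(x,\sigma)}(\mathscr{L}_N\phi)(\tfrac{x}{N},\sigma)=0,
\]
the transport parts vanishing by shift-invariance of the sum over $\mathbb{Z}$ and the flip part by symmetry of the rates $c(\sigma,\sigma')$. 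Hence
\[
M^{N,\phi}_t := Y^N_t(\phi)-Y^N_0(\phi)-\int_0^t Y^N_s(\mathscr{L}_N\phi)\dd s
\]
is a martingale, and since $\mathscr{L}_N\phi\to A\phi$ uniformly, the integral term converges to $\int_0^t Y_s(A\phi)\dd s$, which is the weak form of the drift $A^*Y_t\dd t$ in \eqref{SPDE}.

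Second, I would compute the predictable quadratic variation via the carr\'e-du-champ $\Gamma(F)=L_NF^2-2FL_NF=\sum_{\text{jumps}}\text{rate}\cdot(\Delta F)^2$. Summing squared increments of $Y^N(\phi)$ over the three jump types and Taylor expanding, the diffusive part produces $\tfrac{2\kappa}{N}\sum_{(x,\sigma)}\eta_s(x,\sigma)(\partial_x\phi(\tfrac{x}{N},\sigma))^2$, the active-jump part carries an extra factor $N^{-1}$ and vanishes, and the flip part produces $\tfrac1N\sum_{(x,\sigma)}\eta_s(x,\sigma)\sum_{\sigma'}c(\sigma,\sigma')(\phi(\tfrac{x}{N},\sigma')-\phi(\tfrac{x}{N},\sigma))^2$. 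Starting from the stationary product measure $\mu_\rho$, each empirical average $\tfrac1N\sum_{(x,\sigma)}\eta_s(x,\sigma)g(\tfrac{x}{N},\sigma)$ may be replaced by $\rho\sum_\sigma\int g(x,\sigma)\dd x$ — a Boltzmann--Gibbs-type replacement that is elementary here, since for independent particles the quantity is linear in $\eta$ with exactly computable mean and $O(N^{-1})$ variance under $\mu_\rho$. Using the Dirichlet-form identity $\tfrac12\sum_{\sigma,\sigma'}c(\sigma,\sigma')(\phi(\sigma')-\phi(\sigma))^2=\langle\langle\phi,\Sigma\phi\rangle\rangle$ with $\Sigma$ as in \eqref{Sigma}, this gives
\[
\langle M^{N,\phi}\rangle_t\to\big(2\kappa\rho\,\langle\langle\partial_x\phi,\partial_x\phi\rangle\rangle+2\rho\,\langle\langle\phi,\Sigma\phi\rangle\rangle\big)\,t,
\]
which is exactly the variance generated by the two independent noises $\sqrt{2\kappa\rho}\,\partial_x\dd\mathscr{W}_t$ and $\sqrt{2\rho\Sigma}\,\dd\tilde{\mathscr{W}}_t$.

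Third, I would establish tightness of $Q^N$ in $D([0,T];(C^\infty_{c,S})^*)$ using Mitoma's criterion, reducing it to tightness of $\{Y^N_\cdot(\phi)\}_N$ for each fixed $\phi$. The latter follows from the martingale decomposition together with Aldous's criterion: tightness of the martingale part from the convergence (hence uniform boundedness) of its quadratic variation, and tightness of the drift term $\int_0^t Y^N_s(\mathscr{L}_N\phi)\dd s$ from a uniform $L^2$ bound on $Y^N_s(\mathscr{L}_N\phi)$, which holds by stationarity. I would also control the initial field: since $\eta_0\sim\mu_\rho$ is an i.i.d.\ Poisson field with variance $\rho$, the CLT gives that $Y^N_0$ converges to white noise of intensity $\rho$, i.e.\ the Gaussian field with covariance $\rho\langle\langle\phi,\phi\rangle\rangle$, which is precisely the invariant measure of the limiting dynamics, yielding stationarity of $Y$.

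Finally, I would identify every limit point. Passing to the limit in the martingale problem, any weak limit $Y$ satisfies, for all $\phi$, that $M^\phi_t=Y_t(\phi)-Y_0(\phi)-\int_0^t Y_s(A\phi)\dd s$ is a continuous martingale with the deterministic quadratic variation displayed above; by L\'evy's characterization $M^\phi$ is Gaussian, so $Y$ solves the martingale problem for the generalized Ornstein--Uhlenbeck process \eqref{SPDE}. Uniqueness of this martingale problem (Holley--Stroock theory for infinite-dimensional OU processes, exploiting regularity of $e^{tA^*}$) then pins down the law $Q$, giving $Q^N\to Q$. I expect the main obstacle to be not the algebra of the quadratic variation but the two soft analytic steps: verifying the replacement estimate for the carr\'e-du-champ uniformly in time under $\mu_\rho$, and proving uniqueness of the limiting martingale problem in the distribution-valued setting, which requires the appropriate nuclear/Hilbert-space framework.
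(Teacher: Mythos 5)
Your proposal is correct and follows essentially the same route as the paper: the Dynkin martingale decomposition with $L_N Y^N_s(\phi)=Y^N_s(\mathscr{L}_N\phi)$, the carr\'e-du-champ computation with replacement of empirical averages by their $\mu_\rho$-means (mean exact, variance $O(N^{-1})$), Mitoma plus Aldous for tightness, and identification of limit points through the martingale problem with its uniqueness. The only steps you gloss over are exactly the ones the paper spends technical effort on — fourth-moment bounds giving the uniform integrability needed to pass the (squared) martingales to the limit, a.s.\ continuity of limit paths, and the workaround for non-metrizability of $D([0,T];(C_{c,S}^\infty)^*)$ — but you correctly flag these as the remaining analytic work.
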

By the assumed symmetry of the rates $c(\sigma,\sigma')$, for $\phi,\psi \in C_{c,S}^\infty$ we have $\left<\left<\Sigma\phi,\psi\right>\right> = \left<\left<\phi,\Sigma\psi\right>\right>$,
and moreover $\left<\left<\Sigma\phi,\psi\right>\right>\geq 0$. Hence the operator is bounded, self-adjoint and non-negative and therefore
its square root $\sqrt{\Sigma}$ is well-defined.
The process $\partial_x \dd \mathscr{W}_t$ is defined as the process of distributions such that for all $\phi \in C_{c,S}^\infty$
\[
\left<\phi, \partial_x \dd \mathscr{W}_t\right> = - \left<\partial_x \phi, \dd \mathscr{W}_t\right>.
\]

The rigorous meaning of the SPDE in \eqref{SPDE} is defined in terms of a martingale problem
as in \cite{Kipnis}. More precisely, the map $\phi \mapsto Y_t(\phi)$ is the solution of the following martingale problem: for every $\phi \in C_{c,S}^\infty$, the following two processes
\begin{equation}\label{mart prob}
\begin{split}
\mathscr{M}^{\phi}_t(Y) &=Y_t(\phi) - Y_0(\phi) - \bigintsss_0^t Y_{s}(A\phi)ds,\\
\mathscr{N}^{\phi}_t(Y) &=\mathscr{M}^{\phi}_t(Y)^2 - 2t\kappa \rho  \left<\left<\partial_{x} \phi,\partial_{x} \phi\right>\right>- 2t\rho \left<\left<\phi, \Sigma\phi\right>\right>
\end{split}
\end{equation}
 are  martingales with respect to the natural filtration $\mathscr{F}_t = \sigma(Y_s:0\leq s\leq t)$.

\subsection{Stationary covariance of the fluctuation fields via duality}\label{stationary covariance}
We will first compare the covariance structure of the limiting process of $Y^N$ with the covariance structure of the process solving the SPDE in \eqref{SPDE}. This covariance uniquely characterizes the process. More precisely, if we can prove
that $Y^N_t\to Y_t$ where $Y_t$ is a distribution-valued stationary Gaussian process, then the covariance $\E(Y_t(\phi) Y_0(\psi))$ uniquely determines this process. In that sense, the computation of the covariance already determines the only possible candidate limit $Y_t$. As we show below, the covariance is in turn completely determined by the scaling limit of a single dual particle. This shows that for systems with duality, both the hydrodynamic limit and the stationary fluctuations are uniquely determined by the scaling limit of a single dual particle.
\begin{proposition}\label{covariance structure}
For all $\phi,\psi \in C_c^\infty(\mathbb{R}\times S)$
\[
\lim_{N\to\infty} \E[Y_t^N(\phi) Y_0^N(\psi)] = \E[Y_t(\phi) Y_0(\psi)] = \rho \cdot \left<\left<e^{tA}\phi, \psi\right>\right>.
\]
Here $\E$ denotes the stationary expectation starting from the initial configuration distributed according to $\eta_0\sim \mu_\rho$.
\end{proposition}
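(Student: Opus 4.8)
The plan is to reach the closed form $\rho\left<\left<e^{tA}\phi,\psi\right>\right>$ by two independent routes: a direct duality computation of the microscopic two-time covariance (the left-hand limit), and a martingale-problem computation of the covariance of the limiting SPDE (the middle quantity). First I would expand
\[
\E[Y^N_t(\phi)Y^N_0(\psi)] = \frac1N\sum_{(x,\sigma)}\sum_{(y,\sigma')}\phi(\tfrac xN,\sigma)\psi(\tfrac yN,\sigma')\,\E\big[(\eta_t(x,\sigma)-\rho)(\eta_0(y,\sigma')-\rho)\big],
\]
and evaluate the stationary microscopic covariance. Conditioning on $\eta_0$ and applying single-particle duality gives $\E[\eta_t(x,\sigma)\mid\eta_0] = \widehat{\E}_{(x,\sigma)}[\eta_0(\widehat X_t,\widehat\sigma_t)] = \sum_{(z,\tau)}\widehat p_t\big((x,\sigma),(z,\tau)\big)\eta_0(z,\tau)$, where $\widehat p_t$ is the transition kernel of the time-reversed dual particle. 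Taking the expectation over $\mu_\rho$ and using that under the product Poisson measure $\E_{\mu_\rho}[\eta_0(z,\tau)\eta_0(y,\sigma')] = \rho^2 + \rho\,\delta_{(z,\tau),(y,\sigma')}$, the $\rho^2$ contributions cancel against the centering and one is left with the clean identity $\E[(\eta_t(x,\sigma)-\rho)(\eta_0(y,\sigma')-\rho)] = \rho\,\widehat p_t\big((x,\sigma),(y,\sigma')\big)$.

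Substituting this back, the inner sum over $(y,\sigma')$ is exactly $(\widehat S^N_t\psi)(\tfrac xN,\sigma)$, so that
\[
\E[Y^N_t(\phi)Y^N_0(\psi)] = \frac{\rho}{N}\sum_{(x,\sigma)}\phi(\tfrac xN,\sigma)\,(\widehat S^N_t\psi)(\tfrac xN,\sigma).
\]
I would then pass to the limit using the uniform convergence $\widehat S^N_t\psi\to e^{tA^*}\psi$ recorded in Section \ref{duality sec}: splitting the sum as $\frac\rho N\sum\phi(\widehat S^N_t\psi - e^{tA^*}\psi) + \frac\rho N\sum\phi\,e^{tA^*}\psi$, the first term is bounded by $L\|\phi\|_\infty\,\|\widehat S^N_t\psi - e^{tA^*}\psi\|_\infty\to0$ because $\phi$ has compact support (only $O(N)$ nonzero terms), while the second is a Riemann sum converging to $\rho\left<\left<\phi,e^{tA^*}\psi\right>\right>$. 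Finally $\left<\left<\phi,e^{tA^*}\psi\right>\right> = \left<\left<e^{tA}\phi,\psi\right>\right>$ since $e^{tA^*}$ is the adjoint of $e^{tA}$ in the inner product \eqref{innerprod}, which yields the asserted value.

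For the middle quantity I would compute the covariance of the stationary SPDE solution from its martingale characterization \eqref{mart prob}. Multiplying the martingale $\mathscr{M}^\phi_t(Y)$ by the $\mathscr F_0$-measurable random variable $Y_0(\psi)$ and taking expectations gives $\E[Y_t(\phi)Y_0(\psi)] = \E[Y_0(\phi)Y_0(\psi)] + \int_0^t\E[Y_s(A\phi)Y_0(\psi)]\dd s$, so that $g(t):=\E[Y_t(\phi)Y_0(\psi)]$ solves $g'(t)=\E[Y_t(A\phi)Y_0(\psi)]$. Reading this as the evolution $\partial_t G(t,\phi)=G(t,A\phi)$ for the bilinear functional $G(t,\cdot)$ gives $G(t,\phi)=G(0,e^{tA}\phi)$, and the equal-time stationary covariance is $G(0,\phi)=\E[Y_0(\phi)Y_0(\psi)] = \rho\left<\left<\phi,\psi\right>\right>$, the $t=0$ Poisson central limit theorem in the stationary state. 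This again produces $\rho\left<\left<e^{tA}\phi,\psi\right>\right>$, matching the microscopic limit.

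The duality step is essentially algebraic, so the genuinely delicate points are analytic. On the microscopic side the only real work is justifying the Riemann-sum passage to the limit, which is clean precisely because $\phi$ has compact support and the dual semigroups $\widehat S^N_t$ are contractions converging uniformly. On the SPDE side the subtle steps are the legitimacy of differentiating $g(t)$ (which needs only the integrability built into the martingale problem) and the identification $\E[Y_0(\phi)Y_0(\psi)] = \rho\left<\left<\phi,\psi\right>\right>$, which I would deduce from the stationary central limit theorem at $t=0$ rather than invoking the full trajectory convergence of Theorem \ref{fluctuations}; I expect this equal-time identity to be the main obstacle to making the argument fully self-contained.
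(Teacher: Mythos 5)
Your proposal is correct and follows essentially the same route as the paper: single-particle duality plus the product-Poisson covariance identity to reduce the microscopic two-time covariance to $\frac{\rho}{N}\sum\phi\,(\widehat S^N_t\psi)$ and a Riemann-sum limit, and on the SPDE side the martingale-orthogonality argument $\E[\mathscr{M}^\phi_t(Y)Y_0(\psi)]=0$ leading to $G(t,\phi)=G(0,e^{tA}\phi)$ with the stationary white-noise covariance $\rho\left<\left<\phi,\psi\right>\right>$ at time zero (which the paper states with the prefactor $\rho$ inadvertently dropped). The only differences are cosmetic: you phrase duality via the transition kernel $\widehat p_t$ rather than via $\mathrm{Cov}_{\mu_\rho}$ and an indicator, and you spell out the Riemann-sum error estimate that the paper leaves implicit.
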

\begin{proof}
If $Y$ is a solution to the SPDE in \eqref{SPDE}, then we can write
\[
Y_t(\phi) = \mathscr{M}_t^\phi(Y) + Y_0(\phi) + \int_0^tY_s(A\phi)\dd s,
\]
where $\mathscr{M}_t^\phi(Y)$ is a martingale with respect to the filtration $\mathscr{F}_t = \sigma\left(Y_s: 0\leq s\leq t\right)$ such that $\mathscr{M}_0^\phi(Y)=0$. By the martingale property we have that
\[
\E[\mathscr{M}_t^\phi(Y) Y_0(\psi)] = \E\big[\E[\mathscr{M}_t^\phi(Y) Y_0(\psi)|\mathscr{F}_0]\big] = \E\big[ Y_0(\psi)\E[\mathscr{M}_t^\phi(Y)|\mathscr{F}_0]\big]=0,
\]
and so
\begin{align*}
    \E[Y_t(\phi) Y_0(\psi)] = \E[Y_0(\phi)Y_0(\psi)] + \int_0^t \E[Y_s(A\phi) Y_0(\psi)]\dd s.
\end{align*}
Therefore, using that $ \E[Y_0(\phi)Y_0(\psi)]=\left<\left<\phi, \psi\right>\right>$ we obtain that if $Y$ is a solution of \eqref{SPDE}, then we have
\[
\E[Y_t(\phi) Y_0(\psi)]= \E[Y_0(e^{tA}\phi) Y_0(\psi)] = \rho \cdot \left<\left<e^{tA}\phi, \psi\right>\right>.
\]
On the other hand, for any $N\in\mathbb{N}$ we have that
\begin{align}\label{eq1}
\nn\E\left[Y^N_t(\phi) Y^N_0(\psi)\right]
\nn&=\frac{1}{N} \sum_{(x,\sigma)\in V}\sum_{(y,\sigma')\in V} \phi(\tfrac{x}{N},\sigma) \psi(\tfrac{y}{N},\sigma')\int\E_\eta\left[ (\eta_t(x,\sigma)-\rho)(\eta(y,\sigma')-\rho)\right]\dd\mu_\rho(\eta)\\
\nn&=\frac{1}{N} \sum_{(x,\sigma)\in V}\sum_{(y,\sigma')\in V} \phi(\tfrac{x}{N},\sigma) \psi(\tfrac{y}{N},\sigma')\int\widehat{\E}_{(x,\sigma)}\left[ (\eta(\widehat{X}_t,\widehat{\sigma}_t)-\rho)(\eta(y,\sigma')-\rho)\right]\dd\mu_\rho(\eta)\\
&=\frac{1}{N} \sum_{(x,\sigma)\in V}\sum_{(y,\sigma')\in V} \phi(\tfrac{x}{N},\sigma) \psi(\tfrac{y}{N},\sigma')\widehat{\E}_{(x,\sigma)}\left[\mathrm{Cov}_{\mu_\rho}\left( \eta(\widehat{X}_t,\widehat{\sigma}_t), \eta(y,\sigma')\right)\right],
\end{align}
where we used duality for the second equality and Fubini for the last equality. Now note that, because $\mu_\rho$ is a product of Poisson measures, the covariance term is equal to $\rho$ if and only if $(\widehat{X}_t,\widehat{\sigma}_t) = (y,\sigma')$ and zero otherwise. Therefore
\begin{align}\label{eq2}
\nn \sum_{(y,\sigma')\in V} \psi(\tfrac{y}{N},\sigma')\widehat{\E}_{(x,\sigma)}&\left[\mathrm{Cov}_{\mu_\rho}\left( \eta(\widehat{X}_t,\widehat{\sigma}_t), \eta(y,\sigma')\right)\right]\\
\nn &= \rho \sum_{(y,\sigma')\in V} \psi(\tfrac{y}{N},\sigma')\widehat{\E}_{(x,\sigma)}\left[  I\left((\widehat{X}_t,\widehat{\sigma}_t) = (y,\sigma')\right) \right]\\
&= \rho \cdot (\widehat{S}_t^N \psi)(\tfrac{x}{N},\sigma).
\end{align}
Here $\widehat{S}_t^N$ is the semigroup of the Markov process $(\tfrac{\widehat{X}_t}{N},\widehat{\sigma}_t)$, for which we have the following uniform convergence $\widehat{S}_t^N \psi \to e^{tA^*}\psi$ (see Section \ref{duality sec}) . By now combining \eqref{eq1} and \eqref{eq2}, we  find that
\[
\E\left[Y^N_t(\phi) Y^N_0(\psi)\right]  = \rho \cdot \frac{1}{N} \sum_{(x,\sigma)\in V}\sum_{(y,\sigma')\in V} \phi(\tfrac{x}{N},\sigma)(\widehat{S}_t^N \psi)(\tfrac{x}{N},\sigma) \to \rho \cdot \left<\left< \phi, e^{tA^*}\psi\right>\right> = \rho \cdot \left<\left<e^{tA}\phi,\psi\right>\right>,
\]
which concludes the proof.
\end{proof}
\begin{remark}
In Proposition 3.2, the only place where the independence of the particles is manifest is in the pre-factor $\rho$ which corresponds to the limiting variance of the fluctuation field at time zero, because $\eta_0$ is distributed as $\mu_\rho$. When considering any other system which satisfies duality, when $A$ is the scaling limit of the single particle generator, and $\chi(\rho)$ is the limiting variance of the fluctuation field at time zero, we find that the limiting covariance is given by
\[
\E[Y_t(\phi) Y_0(\psi)]= \chi(\rho) \left<\left<e^{tA} \phi. \psi\right>\right>.
\]
E.g. for the exclusion process studied in the section below, $\chi(\rho)=\rho(\alpha-\rho)$.
\end{remark}
\subsection{Interacting case: the  multi-layer SEP}\label{4.3}
The multi-layer symmetric exclusion process, or multi-layer SEP, is a generalization of the symmetric exclusion process on $\mathbb{Z}$ to the multi-layered setting on $\mathbb{Z}\times S$. For this process we look at configurations $\eta \in \{0,1, ..., \alpha\}^V$ with $\alpha \in \mathbb{N}$, i.e., there are  at most $\alpha$ particles per site $v\in V$. Instead of having an active component on every layer $\sigma \in S$ like the run-and-tumble particle system, multi-layer SEP switches to a  different diffusion co\"efficient, denoted by $\kappa_\sigma$, between the layers. The generator of this process is then as follows
\begin{equation*}
    \begin{split}
    L^{SEP}_Nf(\eta) &= N^2 \sum_{(x,\sigma)\in V} \kappa_\sigma \sum_{|x-y|=1}  \eta(x,\sigma) \left(\alpha-\eta(y,\sigma)\right)\left(f\big(\eta^{(x,\sigma)\rightarrow(y,\sigma)}\big)-f(\eta)\right)\\
 &\ \ \ \ \ \ \ \ +   \sum_{(x,\sigma) \in V} \sum_{\sigma'\in S}c(\sigma,\sigma') \eta(x,\sigma) (\alpha-\eta(x,\sigma'))\left(f\big(\eta^{(x,\sigma)\rightarrow(x,\sigma')}\big) - f(\eta)\right).
\end{split}
\end{equation*}
In \cite{Wiechen} it is proved that this process is self-dual and has ergodic measures given by product Binomial measures $\nu_\rho = \bigotimes_{v\in V} \mathrm{Bin}(\alpha, \rho)$ where $\rho\in (0,1)$ is  constant.

The corresponding single-particle generator is then given by
\begin{equation*}
    \mathscr{L}_N^{SEP}\phi(x,\sigma) = \alpha \kappa_\sigma \big((\phi(x+\tfrac{1}{N},\sigma) + \phi(x-\tfrac{1}{N},\sigma) - 2\phi(x,\sigma)\big) + \sum_{\sigma' \in S} c(\sigma,\sigma') \big( \phi(x,\sigma') - \phi(x,\sigma)\big),
\end{equation*}
and $\mathscr{L}_N^{SEP}\phi \to B\phi$ uniformly, where
\begin{equation*}
    (B\phi)(x,\sigma) = \frac{\alpha \kappa_\sigma}{2}\partial_{xx} \phi(x,\sigma) + \sum_{\sigma' \in S} \alpha c(\sigma,\sigma') \big( \phi(x,\sigma') - \phi(x,\sigma)\big).
\end{equation*}
Since we took the rates $c(\sigma,\sigma')$  symmetric, this operator is self-adjoint in the Hilbert space $L^2(\dd x\times |\cdot|_S)$.

Using the same line of proof as in Section \ref{section 4} below, we obtain the following SPDE for the stationary fluctuation field,
\begin{equation}
    \dd Y_t = B Y_t\dd t + \sqrt{2 \rho(\alpha-\rho)K}\partial_{x}\dd \mathscr{W}_t + \sqrt{2 \rho(\alpha-\rho)\Sigma}\dd \mathscr{\tilde{W}}_t.
\end{equation}
Here $K$ is the operator given by $(K\phi)(x,\sigma) = \kappa_\sigma \phi(x,\sigma)$. Note in the noise terms the appearance of the terms $\rho(\alpha-\rho)$ instead of $\rho$ as in \eqref{SPDE}. This comes from the fact that for $(x,\sigma)\neq (y,\sigma')$
\[
\E_{\nu_\rho}[\eta_s(x,\sigma)(\alpha-\eta_s(y,\sigma'))] = \rho(\alpha-\rho),
\]
which plays a role in the calculation of the expectation of the Carr\'e du champ operator.

\section{Scaling limits of the total density}\label{section 5}
If we sum over the layers, i.e., over the $\sigma$-variables, then the resulting configuration which gives the total number of particles at each site is no longer a Markov process. Therefore, both in the hydrodynamic limit as well as in the fluctuations we expect  memory terms to appear in the form of higher order time derivatives in the limiting equations.
The stationary fluctuations of the empirical distribution of the total number of particles will then become a non-Markovian  Gaussian process which we can identify explicitly.

Next, we consider the small-noise limit of these fluctuations. We then obtain a large deviation principle via large deviations of Schilder's type for Gaussian processes
(i.e., small variance limit of Gaussian processes, see e.g. \cite{DS} p.\ 88, and also \cite{lifshits}), and we have  memory terms in the corresponding large deviation rate function. We will make these memory effects explicit in the simplest possible setting where $\kappa=0$ in \eqref{GEN}. To our knowledge, this is the first example of an explicit expression for a large deviation rate function of the empirical distribution of particles in a  non-Markovian context. In general such rate functions can be obtained from the contraction principle of the Markovian multi-layer system, but this expression in the form of an infimum is implicit, can rarely be  made explicit, and therefore does not make manifest the effect of memory terms.

In the whole of this section, for notational simplicity,   we further restrict to $S=\{-1,1\}$ (two layers) and put $c(1,-1) = c(-1,1)=:\gamma$. The aim is then to study the fluctuations of the total density of particles, where we sum up the particles in both layers. This produces an empirical measure and  fluctuation field on $\mathbb{R}$ given by
\begin{equation*}
    \zeta^N_t = \frac{1}{N} \sum_{(x,\sigma) \in V} \eta^N_t(x,\sigma) \delta_{\tfrac{x}{N}}, \ \ \ \ \ \ Z^N_t = \frac{1}{\sqrt{N}} \sum_{(x,\sigma)\in V} (\eta_t(x,\sigma) - \rho)\delta_{\tfrac{x}{N}}.
\end{equation*}
\subsection{Hydrodynamic equation for the total density}
From Theorem \ref{hydrodynamic limit} we can deduce that $\zeta_t^N$ converges in probability to $\varrho_t(x)\dd x$, where the density $\varrho_t(x)$ is the sum of the densities on both layers, i.e., $\varrho_t(x) = \rho_t(x,1) + \rho_t(x,-1)$ with $\rho_t(x,\sigma)$ the solution to the hydrodynamic equation $\dot{\rho}_t = A^*\rho_t$. We can rewrite this equation as a coupled system of linear PDE's given by
\[
\begin{cases}
    \dot{\rho}_t(x,1) = \left(\frac{\kappa}{2}\partial_{xx} - \lambda\partial_x\right) \rho_t(x,1) + \gamma(\rho_t(x,-1) - \rho_t(x,1)),\\\\
    \dot\rho_t(x,-1) = \left(\frac{\kappa}{2}\partial_{xx} + \lambda\partial_x\right) \rho_t(x,-1) + \gamma(\rho_t(x,1) - \rho_t(x,-1)).
\end{cases}
\]
Summing up both equations gives us a  PDE for the total density $\varrho_t(x)$. This PDE  also depends on the difference of the densities, which we will denote by $\Delta_t(x) := \rho_t(x,1) - \rho_t(x,-1)$, and therefore we get a new system of PDE's,
\begin{equation}\label{system sum}
\begin{cases}
\dot{\varrho}_t(x) = \frac{\kappa}{2} \partial_{xx} \varrho_t(x) - \lambda \partial_x \Delta_t(x),\\\\
\dot{\Delta}_t(x) = \frac{\kappa}{2} \partial_{xx} \Delta_t(x) - \lambda \partial_x\varrho_t(x) -2\gamma \Delta_t(x).
\end{cases}
\end{equation}
From this system we can actually find a closed equation for $\varrho(x)$. Namely, by first taking a second derivative in time of the upper equation we find that
\begin{align}\label{kaki}
\ddot{\varrho}_t(x)
= \frac{\kappa}{2} \partial_{xx} \dot{\varrho}_t(x) - \lambda \partial_x \dot{\Delta}_t(x)
= \frac{\kappa}{2} \partial_{xx} \dot{\varrho}_t(x) - \lambda \partial_x\left( \frac{\kappa}{2} \partial_{xx} \Delta_t(x) - \lambda \partial_x\varrho_t(x) -2\gamma \Delta_t(x)\right).
\end{align}
Now we use that from the first equation in \eqref{system sum} we have  $-\lambda \partial_x \Delta_t(x) = \dot{\varrho}_t(x) - \frac{\kappa}{2}\partial_{xx} \varrho_t(x)$. Substituting this in \eqref{kaki},  we find the following closed equation for the total density
\[
\ddot{\varrho}_t(x) - (\kappa \partial_{xx}+2\gamma)\dot{\varrho}_t(x) = \left((\lambda^2-\gamma\kappa) \partial_{xx} -  \frac{\kappa^2}{4}  (\partial_x)^4\right) \varrho_t(x).
\]
\subsection{Fluctuations of the total density}
For the analysis of the fluctuation field of the total density we first define the fluctuation fields of each layer, and then by taking higher order derivatives as in the previous subsection, we obtain a second order SPDE for the fluctuations of the total density (cf. \eqref{Zt} below). We first   set up a framework where we can rigorously deal with the various distributions  coming from the SPDE given in \eqref{SPDE} corresponding to both layers. We start by defining a fluctuation field for each layer individually.
\[
Y_{t,\sigma}^N = \frac{1}{\sqrt{N}} \sum_{x\in \mathbb{Z}} (\eta_t(x,\sigma) - \rho) \delta_{\tfrac{x}{N}}, \ \sigma\in \{-1,1\}
\]
The relation between these  fluctuation fields and $Z^N_t$ is as follows: for every  $\phi \in C_c^\infty$ we have that
\begin{equation}\label{ping}
\left<\phi, Z^N_t\right> = \left<\phi, Y_{t,1}^N\right> + \left<\phi, Y_{t,-1}^N\right>.
\end{equation}
However, there is also a direct relation between the fluctuation fields on both layers and the fluctuation field $Y^N_t$ on $\mathbb{R}\times S$ defined in \eqref{fluctuation field}: for every $\phi\in C_{c,S}^\infty$ the following holds
\begin{equation}\label{pong}
\left<\phi,Y_t^N\right> = \left<\phi(\cdot,1), Y_{t,1}^N\right> + \left<\phi(\cdot,-1), Y_{t,-1}^N\right>.
\end{equation}
In this way $Y_t^N$, but more importantly  its limiting process $Y_t$, can be interpreted as a column vector of distributions, $Y_t = \left(\begin{matrix} Y_{t,1}&Y_{t,-1}\end{matrix}\right)^T$, working on a row vector of functions, $\phi = \left(\begin{matrix} \phi(\cdot,1)&\phi(\cdot,-1)\end{matrix}\right)$. With this in mind, we can look at the vector representation of the measure  $A^*Y_t$. We have that
\begin{align*}
\left<\phi, A^*Y_t\right> = \left<A\phi,Y_t\right>
&=\left< (\tfrac{\kappa}{2}\partial_{xx} +\lambda \partial_x)\phi(\cdot, 1),Y_{t,1}\right> + \left<\phi(\cdot,1), \gamma (Y_{t,-1} - Y_{t,1})\right> \\
&\ \ \ \ \ + \left< (\tfrac{\kappa}{2}\partial_{xx} -\lambda \partial_x)\phi(\cdot, -1),Y_{t,-1}\right> + \left<\phi(\cdot,-1), \gamma (Y_{t,1} - Y_{t,-1})\right>\\
&= \left<\phi(\cdot, 1), (\tfrac{\kappa}{2}\partial_{xx} -\lambda \partial_x)Y_{t,1} + \gamma (Y_{t,-1} - Y_{t,1}) \right>\\
&\ \ \ \ \ + \left<\phi(\cdot, -1), (\tfrac{\kappa}{2}\partial_{xx} +\lambda \partial_x)Y_{t,-1} + \gamma (Y_{t,1} - Y_{t,-1}) \right>.
\end{align*}
Therefore $A^*Y_t$ corresponds to the following vector of distributions
\[
A^*Y_t = \left(\begin{matrix}  (\tfrac{\kappa}{2}\partial_{xx} -\lambda \partial_x)Y_{t,1} + \gamma (Y_{t,-1} - Y_{t,1})\\\\(\tfrac{\kappa}{2}\partial_{xx} +\lambda \partial_x)Y_{t,-1} + \gamma (Y_{t,1} - Y_{t,-1}) \end{matrix}\right).
\]
In a similar way we can find a vector representation of the noise part in the SPDE \eqref{SPDE}, namely
\begin{align*}
 \sqrt{2 \kappa\rho}\partial_{x}\dd \mathscr{W}_t + \sqrt{2 \rho\Sigma}\dd \mathscr{\tilde{W}}_t
 &=  \sqrt{2 \kappa\rho}\partial_{x}\left(\begin{matrix} \dd W_{t,1}\\ \dd W_{t,-1} \end{matrix}\right)+ \sqrt{2 \rho\Sigma}\left(\begin{matrix} \dd \tilde{W}_{t,1}\\ \dd\tilde{W}_{t,-1} \end{matrix}\right) \\
 &= \left(\begin{matrix}
 \sqrt{2\kappa \rho} \partial_{x} \dd W_{t,1} + \sqrt{\gamma \rho} \left(\dd\tilde{W}_{t,-1} - \dd\tilde{W}_{t,1}\right)\\
 \sqrt{2\kappa \rho} \partial_{x} \dd W_{t,-1} + \sqrt{\gamma \rho} \left(\dd\tilde{W}_{t,1} - \dd \tilde{W}_{t,-1}\right)
 \end{matrix}\right),
\end{align*}
where all the $\dd W_{t,i}, \dd \tilde{W}_{t,i}$ are  independent space-time white noises  on $\mathbb{R}$.
In this notation, the SPDE in \eqref{SPDE} actually gives us a system of SPDE's given by
\begin{equation*}
    \begin{cases}
    \dd Y_{t,1} = \left[\frac{\kappa}{2} \partial_{xx} Y_{t,1} - \lambda \partial_{x} Y_{t,1} + \gamma \left(Y_{t,-1} - Y_{t,1}\right)\right]\dd t + \sqrt{2\kappa \rho} \partial_{x} \dd W_{t,1} + \sqrt{\gamma \rho} \left(\dd\tilde{W}_{t,-1} - \dd\tilde{W}_{t,1}\right),\\\\
    \dd Y_{t,-1} = \left[\frac{\kappa}{2} \partial_{xx} Y_{t,-1} + \lambda \partial_{x} Y_{t,-1} + \gamma \left(Y_{t,1} - Y_{t,-1}\right)\right]\dd t + \sqrt{2\kappa \rho} \partial_{x} \dd W_{t,-1} + \sqrt{\gamma \rho} \left(\dd\tilde{W}_{t,1} - \dd\tilde{W}_{t,-1}\right).
    \end{cases}
\end{equation*}
Now we are able to sum up these equations to get an SPDE for the fluctuation process of the total density $Z_t$. Just like in the hydrodynamic limit, this will again depend on the difference of the two processes above, denoted by $R_t := Y_{t,1} - Y_{t,-1}$. This gives us the following system of coupled SPDE's
\begin{equation}\label{spde sum}
    \begin{cases}
    \dd Z_t = \left[\frac{\kappa}{2} \partial_{xx} Z_t - \lambda \partial_{x} R_t\right] \dd t +2\sqrt{\kappa \rho} \partial_{x} \dd W_{t,Z},\\\\
    \dd R_t = \left[ \frac{\kappa}{2} \partial_{xx} R_t - \lambda \partial_{x} Z_t -2\gamma R_t\right] \dd t + 2\sqrt{\kappa\rho}\partial_{x} \dd W_{t,R} + 2\sqrt{2\gamma\rho}  \dd\tilde{W}_{t},
    \end{cases}
\end{equation}
where
$$W_{t,Z} = \frac{1}{\sqrt{2}}\left(W_{t,1} + W_{t,-1}\right),\ \ \ \  W_{t,R} = \frac{1}{\sqrt{2}}\left(W_{t,1} - W_{t,-1}\right),\ \ \ \  \tilde{W}_{t} = \frac{1}{\sqrt{2}}\left(\tilde{W}_{t,1} - \dd\tilde{W}_{t,-1}\right),$$
which are all independent space-time white noises on $\mathbb{R}$.

\subsection{Covariance of the total density}
The process $Z_t$  introduced as in \eqref{spde sum} is a (non-Markovian) stationary Gaussian processes. Therefore, we can characterize $Z_t$ through its covariances. Using \eqref{ping} and \eqref{pong}, we can actually relate this covariance to the covariance structure of $Y_t$, which we have already calculated in  Proposition \ref{covariance structure}. In order to do so, for a given  $\phi,\psi \in C_c^\infty$ we define the functions $\bar{\phi}, \bar{\psi}\in C_{c,S}^\infty$ via $\bar{\phi}(x,\sigma) = \phi(x)$ and $\bar{\psi}(x,\sigma) = \psi(x)$. The covariance can then be computed as follows
\begin{align}\label{covar}
  \nn  \E[\left<\phi,Z_t\right> \left<\psi, Z_0\right>]
   \nn &= \E[\left(\left<\phi,Y_{t,1}\right> + \left<\phi,Y_{t,-1}\right>\right)\left(\left<\psi,Y_{0,1}\right> + \left<\psi,Y_{0,-1}\right>\right)]\\
   \nn &=\E[\left(\left<\bar{\phi}(\cdot,1),Y_{t,1}\right> + \left<\bar{\phi}(\cdot,-1),Y_{t,-1}\right>\right)\left(\left<\bar{\psi}(\cdot,1),Y_{0,1}\right> + \left<\bar{\psi}(\cdot,-1),Y_{0,-1}\right>\right)]\\
   \nn &=\E[\left<\bar{\phi},Y_t\right> \left<\bar{\psi},Y_0\right>]\\
    &=\rho\cdot \left<\left< e^{tA}\bar{\phi}, \bar{\psi} \right>\right>.
\end{align}
This covariance strongly resembles the covariance of a stationary Ornstein-Uhlenbeck process, but notice that the semigroup $e^{tA}$ works on the ``extended'' functions $\bar{\phi}, \bar{\psi}$, which corresponds to the non-Markovianity of the
process $\{Z_t, t\geq 0\}$.

Notice that the formula for the covariance obtained in \eqref{covar} is solely based on duality, and is therefore valid as long as we have duality for the multi-layer system, i.e., beyond the case of two internal states and including also interacting cases such as the multi-layer SEP.

\subsection{Closed form equation and large deviations for the case $\kappa=0$}
In the case of $\kappa=0$ the noise term vanishes in the upper equation of \eqref{spde sum} and therefore we  we can solve the system explicitly. Namely, we then find that
\begin{equation*}
    \begin{cases}
    \dd Z_t =  - \lambda \partial_{x} R_t \dd t,  \\\\
    \dd R_t = -\left[ \lambda \partial_{x} Z_t +2\gamma R_t\right] \dd t   + 2\sqrt{\gamma\rho} \dd\tilde{W}_{t}.
    \end{cases}
\end{equation*}
Just like for the hydrodynamic limit of the total density, by now taking a second derivative in time in the first equation we find that $\dd^2 Z_t =  - \lambda \partial_{x} \dd R_t \dd t$. By now filling in $\dd R_t$ from the lower equation, we have that
\begin{align}\label{Zt}
\nn \frac{\dd^2 Z_t}{\dd t^2} &= \lambda^2 \partial_{xx} Z_t -2\gamma\lambda \partial_x R_t    + 2\lambda \sqrt{\gamma\rho}\partial_x \frac{\dd\tilde{W}_{t}}{\dd t}\\
&= \lambda^2 \partial_{xx} Z_t  + 2\gamma \frac{\dd Z_t}{ \dd t} + 2\lambda \sqrt{\gamma\rho}\partial_x \frac{\dd\tilde{W}_{t} }{\dd t}.
\end{align}

From the expression above we are also able to obtain the rate function for  the large deviations of $Z_{t}^{(\varepsilon)}$ in the small noise regime, i.e., where we add a factor $\varepsilon$ before the noise $\tilde{W}_{t}$ which we will send to zero. I.e., we are interested in the large deviations of Schilder type (see
\cite{DS}, \cite{lifshits}) for the family of Gaussian process  given by
\begin{equation}\label{Zte}
\frac{\dd^2 Z_{t}^{(\varepsilon)}}{\dd t^2} = \lambda^2 \partial_{xx} Z_{t}^{(\varepsilon)}  + 2\gamma \frac{\dd Z_{t}^{(\varepsilon)}}{ \dd t} +  \varepsilon 2\lambda \sqrt{\gamma\rho}\partial_x \frac{\dd\tilde{W}_{t} }{\dd t}.
\end{equation}
We use that
\[
\p\left( \varepsilon \partial_x \frac{\dd\tilde{W}_{t}}{\dd t} \asymp \Gamma(t,x) \right) \asymp \exp\left(-\varepsilon^{-2} \frac{1}{2}\int_0^T ||\Gamma(t,\cdot)||^2_{H_{-1}}\dd t\right),
\]
which has to be interpreted in the sense of the large deviation principle in the space of space-time distributions. The rate function in the above equation  can be derived from  the log-moment-generating function of a space-time white noise on $\mathbb{R}$, which for a test function $\phi \in C_c^\infty( [0,T]\times \mathbb{R})$ is equal to
\[
\Lambda(\phi) = \lim_{\varepsilon \to 0} \varepsilon^2 \log\left(\E[e^{\varepsilon^{-1}\left<\phi,  \partial_x \frac{\dd W_t}{\dd t}\right>}]\right) =  \frac{1}{2} \left<\partial_x\phi,\partial_x\phi\right>_{L^2(\mathbb{R}\times [0,T])}.
\]
The Legendre transform of $\Lambda$ then yields  the rate function,
\begin{equation*}
\begin{split}
\Lambda^*(\Gamma(t,x) )  &= \sup_{\phi \in C_c^\infty( [0,T]\times \mathbb{R})} \left\{\left<\phi, \Gamma  \right>_{L^2([0,T]\times \mathbb{R})}  - \frac{1}{2}  \left<\partial_x \phi, \partial_x\phi\right>_{L^2( [0,T]\times \mathbb{R})} \right\}\\
&= \frac{1}{2}\int_0^T  \left|\left|\Gamma(t,\cdot)\right|\right|^2_{H^{-1}} \dd t.
\end{split}
\end{equation*}
As a consequence, we obtain the large deviation principle for the random space-time distribution $Z_{t}^{(\varepsilon)}$, namely from $\eqref{Zte}$ it follows that
\begin{equation}\label{di}
\begin{split}
\p\left( Z_{t}^{(\varepsilon)} \asymp \Gamma(t,x)\right) &= \p\left( \varepsilon \partial_x\frac{\dd\tilde{W_t}}{\dd t} \asymp \frac{1}{2\lambda\sqrt{\gamma\rho}} \left( \ddot{\Gamma}(t,x) -2\gamma \dot{\Gamma}(t,x) - \lambda^2 \partial_{xx}\Gamma(t,x)\right) \right)\\
&\asymp \exp\left(-\varepsilon^{-2} \frac{1}{4\lambda\sqrt{\gamma\rho}} \int_0^T \left|\left|  \ddot{\Gamma}(t,\cdot) -2\gamma \dot{\Gamma}(t,\cdot) - \lambda^2 \partial_{xx}\Gamma(t,\cdot)\right|\right|^2_{H_{-1}}\dd t\right).
\end{split}
\end{equation}

\section{Proof of Theorem \ref{fluctuations}}\label{section 4}

In this section we prove Theorem \ref{fluctuations}, following the line of proof of Van Ginkel and Redig in \cite{Ginkel}.
For the readers convenience we sketch the main steps.

We start by introducing the Dynkin martingales of $Y_t^N(\phi)$. For every $\phi \in C_{c,S}^\infty$ and $N\in\mathbb{N}$, let  $\{\mathscr{F}^{N}_{t}:t\geq0\}$ be the filtration generated by $\{Y^N_{t}:t\geq 0\}$. Because the configuration process $\{\eta_t:t\geq 0\}$ is a  Markov processes
the following processes
\begin{equation}\label{dynk mart}
\begin{split}
    \mathscr{M}^{N,\phi}_{t}(Y^N) &= Y^N_{t}(\phi) - Y^N_{0}(\phi) - \int_0^t L_N Y^N_{s}(\phi)\dd s,\\
    \mathscr{N}^{N,\phi}_{t}(Y^N) &=\mathscr{M}^{N,\phi}_{t}(Y^N)^2 - \int_0^t \Gamma_{s}^{N,\phi}(Y^N)\dd s,
\end{split}
\end{equation}
are $\mathscr{F}^{N}_{t}$-martingales,  where $\Gamma_{s}^{N,\phi}$ is the so-called Carr\'e du champ operator given by
\begin{equation}\label{Gamma}
    \Gamma_{s}^{N,\phi}(Y^N) := L_N\big(Y_{s}^N(\phi)^2\big) - 2Y_{s}^N(\phi)L_NY_{s}^N(\phi) .
\end{equation}
The aim is then to prove that as $N\to\infty$, the martingales in \eqref{dynk mart} converge to the martingales from \eqref{mart prob}. This fact, complemented with a proof of tightness and the fact that the martingale problem \eqref{mart prob} has a unique solution, then completes the proof.
In Section \ref{substituting the martingales} we prove the convergence of the martingales, in Section \ref{Tigh} we prove the tightness, and in Section \ref{Uniq} we prove the uniqueness of the solution of the martingale problem \eqref{mart prob}.
\subsection{Substituting the martingales}\label{substituting the martingales}
Our goal for this section is to show that in the limit as $N\to \infty$, we can substitute $\mathscr{M}^{\phi}_t(Y^N)$ and $\mathscr{N}^{\phi}_t(Y^N)$ (with $\mathscr{M}^{\phi}_t$ and $\mathscr{N}^{\phi}_t$ defined as in \eqref{mart prob}) for $\scrM^{N,\phi}_t(Y^N)$ and $\scrN^{N,\phi}_t(Y^N)$ respectively. We do so in the  Propositions \ref{substitute M} and \ref{substitute N}. We recall the reader that the expectation $\E$ stands for the stationary expectation starting from the initial configuration distributed according to $\eta_0\sim \mu_\rho$.
\bp\label{substitute M}
For all $\phi \in C_{c,S}^\infty$ we have
\beq
\nonumber\lim_{N\to\infty} \E\left[\left|\scrM^{N,\phi}_t(Y^N) -\mathscr{M}^{\phi}_t(Y^N) \right|^2\right]=0.
\eeq
\ep
\begin{proof}
First of all, note that by definition
\begin{equation*}
    \E\left[\left|\scrM^{N,\phi}_t(Y^N) -\mathscr{M}^{\phi}_t(Y^N) \right|^2\right]=  \E\left[\left|\int_0^t L_NY_{s}^N(\phi)\dd s - \int_0^tY^N_s( A\phi) \dd s\right|^2\right].
\end{equation*}
For a given $(x,\sigma) \in V$ we have that
\begin{equation*}
\begin{split}
L_N \eta(x,\sigma) &= \kappa N^2[\eta(x+1,\sigma)+\eta(x-1,\sigma) - 2\eta(x,\sigma)]\\
& \ \ \ + \lambda N[\eta(x-\sigma,\sigma)-\eta(x,\sigma)]\\
& \ \ \ + \sum_{\sigma'\in S} c(\sigma,\sigma') [\eta(x,\sigma') - \eta(x,\sigma)],
\end{split}
\end{equation*}
and so in particular we find that
\begin{equation*}
\begin{split}
L_N Y_s^N(\phi) &= \frac{1}{\sqrt{N}} \sum_{(x,\sigma) \in V}\big( L_N\eta_s(x,\sigma)\big)  \phi(\tfrac{x}{N},\sigma) = \frac{1}{\sqrt{N}} \sum_{(x,\sigma) \in V} \eta_s(x,\sigma)\cdot  (\mathscr{L}_N\phi)(\tfrac{x}{N},\sigma),
\end{split}
\end{equation*}
 where we remind the reader that $\mathscr{L}_N$ is the generator of a single run-and-tumble particle on the rescaled space $\frac{1}{N}\mathbb{Z}\times S$.
Now, using that for any $\phi\in C_{c,S}^\infty$ we have that
\[
\sum_{(x,\sigma)\in V} \rho \cdot (\mathscr{L}_N\phi)(\tfrac{x}{N},\sigma) = 0,
\]
we are able to write
\[
L_N Y_s^N(\phi) = \frac{1}{\sqrt{N}} \sum_{(x,\sigma) \in V} (\eta_s(x,\sigma)-\rho) \cdot  (\mathscr{L}_N\phi)(\tfrac{x}{N},\sigma).
\]
Since $\mathscr{L}_N\phi \to A\phi$ uniformly, where $A$ is defined in \eqref{A}, we have that
\begin{equation}\label{LN sub}
\begin{split}
    L_NY_{s}^N(\phi) &= \frac{1}{\sqrt{N}} \sum_{(x,\sigma)\in V} (\eta_s(x,\sigma)-\rho) \cdot (A\phi)(\tfrac{x}{N},\sigma) + R_1(\phi,N, s),
    \end{split}
\end{equation}
where $R_1(\phi,N, s)$ is an error term produced by  the Taylor approximations. Since $\phi$ is compactly supported,  if we define $V_{\phi}^N := \{(x,\sigma) \in V, \phi(\tfrac{x}{N},\sigma)\neq 0\}$ then $|V_\phi^N| = \mathcal{O}(N)$. Furthermore, the error term is bounded in the following way
\begin{equation}\label{R1}
    |R_1(\phi,N, s)| \leq \frac{1}{N^{3/2}}\sum_{(x,\sigma)\in V_{\phi}^N}  (\eta_{s}(x,\sigma)-\rho)(\kappa ||\partial_{xxx}\phi||_\infty + \lambda \sigma^2||\partial_{xx}\phi||_\infty).
\end{equation}
Therefore we find that for every $\phi \in C_{c,S}^\infty$ and $t\geq0$,
\begin{equation}\label{vanishing R1}
\begin{split}
    \E\left[R_1(\phi,N,s)^2\right] & \leq  \frac{1}{N^3} \E\left[\sum_{(x,\sigma),(y,\sigma') \in V_{\phi}^N}  (\eta_s(x,\sigma) - \rho)(\eta_s(y,\sigma')-\rho) (\kappa||\partial_{xxx}\phi||_\infty + \lambda\sigma^2||\partial_{xx}\phi||_\infty)^2\right]\\
    &=\frac{1}{N^3}\sum_{(x,\sigma),(y,\sigma') \in V_{\phi}^N}  \Cov\big(\eta_s(x,\sigma), \eta_s(y,\sigma')\big) (\kappa||\partial_{xxx}\phi||_\infty + \lambda \sigma^2||\partial_{xx}\phi||_\infty)^2.\\
\end{split}
\end{equation}
Since we are starting  the process $\eta_t$ from the invariant product measure $\mu_\rho$, we have that
\begin{equation}\label{Cov}
    \Cov\big(\eta_s(x,\sigma), \eta_s(y,\sigma')\big) = \rho\cdot I\big((x,\sigma) = (y,\sigma')\big).
\end{equation}
Therefore,
\[
\E\left[R_1(\phi,N,s)^2\right]  \leq \frac{1}{N^3} |V_{\phi}^N| \rho(\kappa||\partial_{xxx}\phi||_\infty + \lambda\sigma^2||\partial_{xx}\phi||_\infty)^2\to 0,
\]
where we used the fact that $|V_{\phi}^N| = \mathcal{O}(N)$. Note that the above convergence is uniform in $s$, and therefore by dominated convergence we find that
\begin{align*}
    \lim_{N\to\infty} \E\left[\left|\scrM^{N,\phi}_t(Y^N) -\mathscr{M}^{\phi}_t(Y^N) \right|^2\right]
    = \lim_{N\to\infty}  \int_0^t \E\left[R_1(\phi,N,s)^2\right] \dd s=0,
\end{align*}
which concludes the proof.
\end{proof}
The substitution of $\scrN_t^\phi(Y^N)$ is a bit more work and requires a fourth moment estimate. We start by  proving two lemmas. The proof of the substitution result in Proposition \ref{substitute N} immediately follows from these lemmas.

\bl\label{substitute M2}
For all $\phi \in C_{c,S}^\infty$ we have the following
\begin{equation}\label{lim 1}
    \lim_{k\to\infty} \E\left[\left(\scrM_{t}^{N, \phi}(Y^{N})^2 - \scrM_{t}^{\phi}(Y^{N})^2\right)^2\right]=0.
\end{equation}
\el
\begin{proof}
We start with the following application of H\"older's inequality
\begin{equation}\label{M2}
\begin{split}
    \E\left[\left(\scrM_{t}^{N, \phi}(Y^{N})^2 - \scrM_{t}^{\phi}(Y^{N})^2\right)^2\right]
        &=
    \E\left[\left(\scrM_{t}^{N, \phi}(Y^{N}) - \scrM_{t}^{\phi}(Y^{N})\right)^2\left(\scrM_{t}^{N, \phi}(Y^{N})+ \scrM_{t}^{\phi}(Y^{N})\right)^2\right]\\
        &\leq\left(\E\left[\left(\scrM_{t}^{N, \phi}(Y^{N}) - \scrM_{t}^{\phi}(Y^{N})\right)^4\right]\cdot \E\left[\left(\scrM_{t}^{N, \phi}(Y^{N})+ \scrM_{t}^\phi(Y^{N})\right)^4\right]\right)^{\tfrac{1}{2}}.
        \end{split}
\end{equation}
We will first show that the first expectation in the last line vanishes as $N\to \infty$, and afterwards we will show that the second expectation is uniformly bounded in $N$.  Note that by \eqref{LN sub}
\begin{align*}
    \E\left[\left(\scrM_{t}^{N, \phi}(Y^{N}) -  \scrM_{t}^{\phi}(Y^{N})\right)^4\right]= \E\left[ \left(\int_0^t \left[R_1(\phi,N, s)\right]\dd s\right)^4\right]
    \leq t^3 \int_0^T \E\left[R_1(\phi,N, s)^4\right]\dd s.
\end{align*}
 Using the bound in  $\eqref{R1}$ we find that
\begin{equation*}
    \E\left[R_1(\phi,N, s)^4\right] \leq \frac{1}{N^6} \sum_{\substack{(x_i,\sigma_i) \in V_{\phi}^N\\1\leq i\leq 4}} \E\left[\prod_{i=1}^4(\eta_{s}(x_i,\sigma_i) - \rho)\right](\kappa||\partial_{xxx}\phi||_\infty + \lambda\sigma^2||\partial_{xx}\phi||_\infty)^4.
\end{equation*}
Since we start from the product Poisson measure $\mu_\rho$, we only get non-zero contributions in the expectation on the right-hand side when all $(x_i,\sigma_i)$ are equal or when we have two distinct pairs, given by
\begin{equation*}
    \E\left[ (\eta_s(x,\sigma)-\rho)^4\right] = 3\rho^2 + \rho, \ \ \ \ \E\left[(\eta_s(x,\sigma)-\rho)^2(\eta_s(y,\sigma')-\rho)^2\right] = \rho^2.
\end{equation*}
Therefore, it follows that
\begin{equation}\label{vanish R1}
  \E\left[R_1(\phi,N, s)^4\right]  \leq \frac{1}{N^6}\left(|V_{\phi}^N| (3\rho^2 + \rho) + |V_{\phi}^N|^2\rho^2\right)(\kappa||\partial_{xxx}\phi||_\infty + \lambda\sigma^2||\partial_{xx}\phi||_\infty)^4,
\end{equation}
and so $R_1(\phi, N,s,\sigma) \xrightarrow{L^4} 0$ uniformly in $s$. From this we can conclude that
\begin{equation*}
    \E\left[\left(\scrM_{t}^{N, \phi}(Y^{N}) -  \scrM_{t}^{\phi}(Y^{N})\right)^4\right] \leq t^3 \int_0^t \E\left[R_1(\phi,N, s)^4\right]\dd s \to 0.
\end{equation*}

To now show that the second expectation in the last line of \eqref{M2} is uniformly bounded in $N$, note that
\begin{equation}\label{sum}
    \E\left[\left(\scrM_{t}^{N, \phi}(Y^{N})+ \scrM_{t}^\phi(Y^{N})\right)^4\right] \leq 8\left(\E\left[\left(\scrM_{t}^{N, \phi}(Y^{N})\right)^4\right] + \E\left[\left(\scrM_{t}^{ \phi}(Y^{N})\right)^4\right]\right),
\end{equation}
and similarly
\begin{equation}\label{3 expectations}
    \begin{split}
        \E\left[\left(\scrM_{t}^{ \phi}(Y^{N})\right)^4\right] \leq 27\left(\E\left[Y_{t}^N(\phi)^4\right] + \E\left[Y_{0}^N(\phi)^4\right] + \E\left[\left(\int_0^t Y_{s}^N(A\phi)\dd s\right)^4\right]\right).
    \end{split}
\end{equation}
Now we  need to show that three expectations on the right-hand-side  are uniformly bounded. For the first expectation, we find that
\begin{equation*}
\begin{split}
    \E\left[Y_{t}^N(\phi)^4\right] &\leq \frac{1}{N^2} \cdot  \sum_{(x_1,\sigma_1)\in V_{\phi}^N} \cdots \sum_{(x_4,\sigma_4)\in V_{\phi}^N}    \E\left[\prod_{i=1}^4(\eta_t(x_i,\sigma_i) - \rho)\right]||\phi||_\infty.
\end{split}
\end{equation*}
Similarly as in \eqref{vanish R1}, we find that
\begin{equation}\label{uniform Y}
\E\left[Y_{t}^N(\phi)^4\right] \leq \frac{1}{N^2} \left(|V_{\phi}^N| (3\rho^2 + \rho) + |V_{\phi}^N|^2\rho^2\right)||\phi||_\infty = \mathcal{O}(1),
\end{equation}
hence it is  uniformly bounded, and similar approaches can be used for $ \E\left[Y_{0}^N(\phi)^4\right]$ and $ \E\left[Y_{s}^N(A\phi)^4\right]$. The fact that the last expectation in \eqref{3 expectations} is uniformly bounded now follows from an application of H\"older's inequality, namely
\begin{equation*}
    \E\left[\left(\int_0^t Y_{s}^N(A\phi)\dd s\right)^4\right] \leq t^3 \int_0^T \E\left[\left(Y_{s}^N(A\phi)\right)^4\right]\dd s.
\end{equation*}
Therefore we know that  $\E\left[\big(\scrM_{t}^{ \phi}(Y^{N})\big)^4\right]$ is uniformly bounded. The proof for $\E\left[\big(\scrM_{t}^{N, \phi}(Y^{N})\big)^4\right]$ works the same way if we  use that
\begin{equation*}
    \begin{split}
         \E\left[\left(L_NY_{s}^N(\phi)\right)^4\right] =8\left(\E\left[\left(Y_{s}^N(A\phi)\right)^4\right] + \E\left[R_1(\phi,N,t,\sigma)^4\right]\right),
    \end{split}
\end{equation*}
where by \eqref{vanish R1} we already know that $\E\left[R_1(\phi,N,t,\sigma)^4\right]$ is uniformly bounded.  Hence we can conclude that \eqref{lim 1} holds.
\end{proof}

\bl\label{substitute Gamma} For all $\phi \in C_{c,S}^\infty$ we have the following
\begin{equation*}
    \lim_{N\to\infty} \E\left[\left(\int_0^t   \Gamma_{s}^{N,\phi}(Y^N)\dd s - 2t\kappa \rho  \left<\left<\partial_{x} \phi,\partial_{x} \phi\right>\right>- 2t\rho \left<\left<\phi, \Sigma\phi\right>\right>\right)^2\right] = 0,
\end{equation*}
with $\Sigma$ defined as in \eqref{Sigma}.
\el
\begin{proof}
First we recall that for a Markov process with generator $L$ determined by the transition rates $r(\eta,\eta')$ the carr\'{e} du champ operator is computed as follows.
\begin{equation}\label{deriv}
    \begin{split}
        Lf^2(\eta) - 2f(\eta)\cdot L f(\eta)
        &= \sum_{\eta' \in \Om} r(\eta,\eta')\Big( \big( f^2(\eta') - f^2(\eta)\big) - 2\big(f(\eta)f(\eta') - f^2(\eta)\big)\Big)\\
        &= \sum_{\eta'\in\Om} r(\eta,\eta') \big(f(\eta') - f(\eta)\big)^2,
    \end{split}
\end{equation}
Translating this to our setting with $L=L_N$ and $f= Y^N$ we obtain
\ben
    \begin{split}
          \Gamma_{s}^{N,\phi}(Y^N)
        &=\kappa N \sum_{(x,\sigma)\in V} \eta_s(x,\sigma)\left(\phi(\tfrac{x+1}{N},\sigma) - \phi(\tfrac{x}{N},\sigma))^2 + (\phi(\tfrac{x-1}{N},\sigma) - \phi(\tfrac{x}{N},\sigma))^2\right)\\
        &\ \ \ + \lambda  \sum_{(x,\sigma)\in V} \eta_s(x,\sigma) \left(\phi(\tfrac{x+\sigma}{N},\sigma) - \phi(\tfrac{x}{N},\sigma)\right)^2\\
        &\ \ \ + \frac{1 }{N} \sum_{(x,\sigma)\in V} \sum_{\sigma' \in S} c(\sigma,\sigma')\eta_s(x,\sigma)\big(\phi(x,\sigma')-\phi(x,\sigma))^2.
    \end{split}
\een
Using Taylor expansion with rest term, we can write
\begin{equation}\label{gamma 1 deriv}
    \begin{split}
          \Gamma_{s}^{N,\phi}(Y^N)
         &= \frac{2\kappa}{N} \sum_{(x,\sigma)\in V}\eta_s(x,\sigma) \left(\partial_{x}\phi(\tfrac{x}{N},\sigma)\right)^2  +\frac{1}{N} \sum_{(x,\sigma)\in V}\sum_{\sigma'\in S} c(\sigma,\sigma') \eta_s(x,\sigma)(\phi(\tfrac{x}{N},\sigma') - \phi(\tfrac{x}{N},\sigma)^2\\
         &\ \ \ \ \ \ \ \  + R_2(\phi,s,N),
    \end{split}
\end{equation}
with $R_2(\phi,s,N)$ the error term, which is bounded as follows
\ben
|R_2(\phi,s,N)| \leq \kappa\frac{1}{N^3}\sum_{(x,\sigma)\in V_\phi^N} \eta_s(x,\sigma)\kappa||\partial_{xx}\phi||_\infty + \frac{1}{N^2}\sum_{(x,\sigma)\in V_\phi^N} \eta_s(x,\sigma) \lambda \sigma ||\phi'||_\infty.
\een
Following the line of thought leading to \eqref{vanishing R1}, we obtain that $R_2(\phi,s,N) \xrightarrow{L^2}0$. Therefore, for the expectation we find that
\begin{equation}\label{exp gamma 1}
    \begin{split}
    \E\left[\Gamma_{s}^{N,\phi}(Y^N)\right] &= \frac{2\kappa\rho}{N} \sum_{(x,\sigma)\in V} (\partial_{x} \phi(\tfrac{x}{N},\sigma))^2 + \frac{2\rho}{N} \sum_{\sigma'\in S}c(\sigma,\sigma')(\phi(\tfrac{x}{N},\sigma') - \phi(\tfrac{x}{N},\sigma))^2 + \E\left[R_2(\phi,s,N)\right]\\
    &\to 2\kappa\rho \left<\left<\partial_x \phi, \partial_x\phi\right>\right> + 2 \rho \left<\left<\phi, \Sigma \phi\right>\right>,
    \end{split}
\end{equation}
and for the variance
\begin{equation} \label{var gamma 1}
\begin{split}
\mathrm{Var}\left[  \Gamma_{s}^{N,\phi}(Y^N)\right]
&\leq \frac{C(\phi,s)}{N^2}\sum_{(x,\sigma), (y,\sigma')\in V_{\phi}^N} \Cov\big(\eta_s(x,\sigma), \eta_s(y,\sigma')\big)\\
&= \frac{C(\phi,s)}{N^2} |V_{\phi}^N|\rho \to 0,
\end{split}
\end{equation}
with $C(\phi,s)$ some constant and where we have used \eqref{Cov} for the equality. Since the variance converges to zero, this means that $ \Gamma_{s}^{N,\phi}(Y^N)$ converges to its mean in $L^2$. Therefore
\begin{align*}
 \lim_{N\to\infty} &\E\left[\left(\int_0^t   \Gamma_{s}^{N,\phi}(Y^N)\dd s - 2t\kappa \rho  \left<\left<\partial_{x} \phi,\partial_{x} \phi\right>\right>- 2t\rho \left<\left<\phi, \Sigma\phi\right>\right>\right)^2\right]\\
 &\leq \lim_{N\to\infty} \int_0^t   \E\left[\left(\Gamma_{s}^{N,\phi}(Y^N) - 2\kappa \rho  \left<\left<\partial_{x} \phi,\partial_{x} \phi\right>\right>- 2\rho \left<\left<\phi, \Sigma\phi\right>\right>\right)^2\right] \dd s\\
 &=0,
\end{align*}
where we used dominated convergence for the last equality.
\end{proof}

\bp\label{substitute N}
For all $\phi \in C_{c,S}^\infty$
$$\lim_{N\to\infty}\E\left[\left|\scrN^{N,\phi}_{t}(Y^{N}) - \scrN^{\phi}_{t}(Y^{N}) \right|^2\right]=0.$$
\ep
\begin{proof}
We have that
\begin{align*}
    \hspace{-5mm}\E\left[\left|\scrN^{N,\phi}_{t}(Y^{N}) - \scrN^{\phi}_{t}(Y^{N}) \right|^2\right]
    &\leq 2\E\left[\left(\scrM_{t}^{N, \phi}(Y^{N})^2 - \scrM_{t}^{\phi}(Y^{N})^2\right)^2\right]\\
    & \ \ \ \  + 2\E\left[\left(\int_0^t   \Gamma_{s}^{N,\phi}(Y^N)\dd s - 2t\kappa \rho  \left<\left<\partial_{x} \phi,\partial_{x} \phi\right>\right>- 2t\rho \left<\left<\phi, \Sigma\phi\right>\right>\right)\right].
\end{align*}
The proof now follows from Lemma   \ref{substitute M2} and \ref{substitute Gamma}.
\end{proof}

\subsection{Tightness}\label{Tigh}
In this section we will show the tightness of the collection $\{Y^N :  N \in \mathbb{N}\}$.
\begin{proposition}\label{tight}
$\{Y^N :  N \in \mathbb{N}\}$ is tight in $D([0,T];(C_{c,S}^\infty)^*)$.
\end{proposition}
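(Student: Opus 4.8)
The plan is to prove tightness in $D([0,T];(C_{c,S}^\infty)^*)$ by reducing to tightness of the real-valued processes $\{Y^N_\cdot(\phi) : N \in \mathbb{N}\}$ in $D([0,T];\mathbb{R})$ for each fixed test function $\phi \in C_{c,S}^\infty$. This reduction is justified by a Mitoma-type criterion: since $(C_{c,S}^\infty)^*$ is the dual of a nuclear Fréchet space, a family of processes with values in this space is tight precisely when all its one-dimensional projections $Y^N_\cdot(\phi)$ are tight. So the genuine work is to establish tightness of $\{Y^N_\cdot(\phi)\}_N$ for each $\phi$, and I would first state Mitoma's theorem (citing the appropriate reference) to make this reduction explicit.

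To prove tightness of the real-valued processes I would use the Aldous--Rebolledo criterion for semimartingales. Using the Dynkin decomposition from \eqref{dynk mart}, write $Y^N_t(\phi) = Y^N_0(\phi) + \int_0^t L_N Y^N_s(\phi)\,\dd s + \mathscr{M}^{N,\phi}_t(Y^N)$, so that $Y^N_\cdot(\phi)$ splits into an initial term, a drift (bounded-variation) term, and a martingale term. The Aldous--Rebolledo criterion then requires two things: first, control of the marginals, i.e. that $\{Y^N_t(\phi)\}_N$ is tight in $\mathbb{R}$ for each fixed $t$, which follows immediately from the uniform fourth-moment bound \eqref{uniform Y} already established in Lemma \ref{substitute M2} (giving in particular uniform boundedness of the second moments); and second, control of the oscillations, namely that for every pair of stopping times $\tau \leq \tau' \leq (\tau+\theta)\wedge T$ the increments of both the predictable drift and the quadratic variation of the martingale are small uniformly in $N$ as $\theta \to 0$.

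For the oscillation estimates I would treat the drift and martingale parts separately. For the drift, I would use $L_N Y^N_s(\phi) = Y^N_s(A\phi) + R_1(\phi,N,s)$ from \eqref{LN sub}, so that $\E\big[\big(\int_\tau^{\tau'} L_N Y^N_s(\phi)\,\dd s\big)^2\big]$ is controlled by $\theta^2$ times the uniformly-bounded second moment of $Y^N_s(A\phi)$ together with the vanishing error contribution from $R_1$; this gives the required $\sup_N \E[\,\cdot\,] \to 0$ as $\theta\to 0$. For the martingale part, the increment of the quadratic variation between $\tau$ and $\tau'$ is $\E\big[\int_\tau^{\tau'} \Gamma^{N,\phi}_s(Y^N)\,\dd s\big]$, and from the expectation computation \eqref{exp gamma 1} one sees that $\E[\Gamma^{N,\phi}_s(Y^N)]$ is bounded uniformly in $N$ and $s$, so this increment is $O(\theta)$ uniformly in $N$, again vanishing as $\theta\to0$.

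The main obstacle I anticipate is not any single estimate — the moment bounds on $Y^N$ and on $\Gamma^{N,\phi}$ are essentially already in hand from the previous subsection — but rather assembling them correctly within the stopping-time framework of Aldous' condition, namely verifying the increment bounds uniformly over \emph{arbitrary} stopping times $\tau,\tau'$ rather than deterministic times. Here the stationarity of the process started from $\mu_\rho$ is the crucial simplifying feature: because all the moment bounds \eqref{uniform Y} and the bound on $\E[\Gamma^{N,\phi}_s]$ hold uniformly in the time variable $s$ (the law of $\eta_s$ is $\mu_\rho$ for every $s$), the estimates transfer verbatim to the stopped increments via the strong Markov property and the optional sampling theorem, so no delicate time-dependence needs to be tracked. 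I would therefore close the argument by invoking Mitoma's theorem to lift the verified one-dimensional Aldous--Rebolledo conditions back to tightness of $\{Y^N\}_N$ in $D([0,T];(C_{c,S}^\infty)^*)$.
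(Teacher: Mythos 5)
Your overall architecture is the same as the paper's: Mitoma's theorem to reduce to the real-valued processes $Y^N(\phi)$, Aldous' criterion, the Dynkin decomposition into initial value, drift and martingale, and the moment bounds of Lemmas \ref{substitute M2} and \ref{substitute Gamma} as input (for the fixed-time marginals the paper uses a CLT argument where you use the fourth-moment bound \eqref{uniform Y}; both work). The gap is in the one step you dismiss as routine: uniformity over \emph{stopping times}. You claim that, by stationarity, the bound $\sup_{s,N}\E\big[\Gamma_s^{N,\phi}(Y^N)\big]<\infty$ transfers ``verbatim'' to $\E\big[\int_\tau^{\tau+\theta}\Gamma_s^{N,\phi}(Y^N)\,\dd s\big]=\mathcal{O}(\theta)$ via the strong Markov property and optional sampling. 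This is false: stationarity is preserved under deterministic time shifts, not random ones, so the law of $\eta_\tau$ at a stopping time $\tau$ is in general \emph{not} $\mu_\rho$. The strong Markov property only yields
\begin{equation*}
\E\left[\int_\tau^{\tau+\theta}\Gamma_s^{N,\phi}(Y^N)\,\dd s\right]=\int_0^\theta \E\left[(S_u h)(\eta_\tau)\right]\dd u,
\end{equation*}
where $h$ denotes the carr\'e du champ as a function of the configuration, and $\E[(S_u h)(\eta_\tau)]$ cannot be controlled by $\E_{\mu_\rho}[h]$ alone: taking for $\tau$ a (capped) hitting time of the set where $h$ is large makes $\E[h(\eta_\tau)]$ of the order of the running maximum of $h(\eta_t)$, not of its stationary mean. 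So first moments of $\Gamma^{N,\phi}$ do not suffice, and optional sampling does not rescue the estimate (it is correctly used only to write $\E[(\scrM^{N,\phi}_{\tau+\theta}-\scrM^{N,\phi}_{\tau})^2]=\E[\int_\tau^{\tau+\theta}\Gamma_s^{N,\phi}\,\dd s]$, which is where the problem starts, not where it ends).

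The repair — and this is exactly what the paper does — is to remove the stopping time from the integration limits \emph{pathwise}, by Cauchy--Schwarz, before taking expectations: since $\Gamma_s^{N,\phi}(Y^N)\geq 0$,
\begin{equation*}
\int_\tau^{\tau+\theta}\Gamma_s^{N,\phi}(Y^N)\,\dd s\;\leq\; \sqrt{\theta}\left(\int_0^{T+\theta}\left(\Gamma_s^{N,\phi}(Y^N)\right)^2\dd s\right)^{1/2},
\end{equation*}
and similarly $\big(\int_\tau^{\tau+\theta}L_N Y_s^N(\phi)\,\dd s\big)^2\leq\theta\int_0^{T+\theta}\big(L_N Y_s^N(\phi)\big)^2\dd s$. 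This costs one extra moment: you need $\sup_{N,s}\E\big[\big(\Gamma_s^{N,\phi}(Y^N)\big)^2\big]<\infty$, not just the mean — which is available, since Lemma \ref{substitute Gamma} shows that $\Gamma_s^{N,\phi}(Y^N)$ converges in $L^2$ — and the uniform $L^2$ bound on $L_N Y_s^N(\phi)$, which follows from the $L^4$ bound in Lemma \ref{substitute M2} (this part you did invoke, though the resulting bound is $\mathcal{O}(\theta)$, or $\mathcal{O}(\sqrt{\theta})$ in the paper's version, rather than your stated $\theta^2$). With these substitutions your argument closes and coincides with the paper's proof.
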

\begin{proof}
Since $C_{c,S}^{\infty}$ is a nuclear space, by Mitoma \cite[Theorem 4.1]{Mitoma} it suffices to prove that for a fixed $\phi \in C_{c,S}^{\infty}$ we have that $\{Y^N(\phi) : N\in\mathbb{N}\}$ is tight in the path space $D([0,T]; \mathbb{R})$. Aldous' criterion, as stated in \cite[Theorem 1]{Aldous}, tells us that it suffices to show the following two things:
\begin{enumerate}[label=\textbf{A.\arabic*}]
    \item \label{1}For all $t\in[0,T]$ and $\varepsilon>0$ there exists a compact $K(t,\varepsilon) \in \mathbb{R}$ such that
    \begin{equation*}
        \sup_{N\in \mathbb{N}} \mathbb{P}\big(Y_t^N(\phi) \notin K(t,\varepsilon)\big) \leq \varepsilon.
    \end{equation*}
    \item \label{2} For all $\varepsilon>0$
    \begin{equation*}
        \lim_{\delta \to 0} \limsup_{N\to\infty} \sup_{\substack{\uptau \in \mathscr{T}_T\\ \theta \leq \delta}} \mathbb{P}\big(|Y_\uptau^N(\phi) - Y_{\uptau + \theta}^N(\phi)|>\varepsilon\big) = 0,
    \end{equation*}
    with $\mathscr{T}_T$ the set of all stopping times bounded by $T$.
\end{enumerate}

Fix $t \in [0,T]$ and $\phi \in C_{c,S}^\infty$. Then, for every $\sigma \in S$ we have that
\begin{equation*}
    \begin{split}
        &\mathbb{E}[Y_{t}^N(\phi)] = \frac{1}{\sqrt{N}} \sum_{(x,\sigma)\in V}\mathbb{E}\left[  \eta_t(x,\sigma)-\rho\right]\phi(\tfrac{x}{N},\sigma)=0,\\
        &\mathrm{Var}[Y_{t}^N(\phi)] = \frac{1}{\sqrt{N}} \sum_{(x,\sigma)\in V}  \Var\left[\eta_t(x,\sigma)-\rho\right]\phi(\tfrac{x}{N},\sigma)= \frac{1}{N}\rho  \sum_{(x,\sigma)\in V} \phi^2(\tfrac{x}{N},\sigma).
    \end{split}
\end{equation*}
By the central limit theorem, we therefore see that every $Y_{t}^N(\phi)$ converges in distribution to the  normal distribution $\mathcal{N}\big(0, \rho \left<\left<\phi,\phi\right>\right>\big)$. This implies the tightness of the real-valued random variables $\{Y_t^N(\phi) : N \in\mathbb{N}\}$, and therefore also \ref{1}.\\

To prove \ref{2}, we note that for every bounded stopping time $\uptau \in \mathscr{T}_T$ we have that
\begin{equation*}
    Y_{\uptau}^N(\phi) = \scrM^{N,\phi}_{\uptau}(Y^N) + Y_{0}^N(\phi) + \int_0^\uptau L_NY_{s}^N( \phi)ds,
\end{equation*}
with $\scrM^{N,\phi}_{\uptau}(Y^N)$ the Dynkin martingale of $Y_{\uptau}^N(\phi)$. Using the Markov inequality, we can then deduce that
\begin{equation}\label{prob fluctuation}
    \begin{split}
         \mathbb{P}\big(|Y_{\uptau}^N(\phi) - Y_{\uptau+\theta}^N(\phi)|>\varepsilon\big) &\leq \frac{1}{\varepsilon^2}\E\left[\left(Y_{\uptau}^N(\phi) - Y_{\uptau+\theta,\sigma}^N(\phi)\right)^2\right]\\
         &\leq\frac{2}{\varepsilon^2}\left(\E\left[\left(\scrM^{N,\phi}_{\uptau}(Y^N) - \scrM_{\uptau+\theta}^{N,\phi}(Y^N)\right)^2\right] + \E\left[\left(\int_\uptau^{\uptau +\theta} L_NY_{s}^N(\phi) \dd s\right)^2\right]\right).
    \end{split}
\end{equation}

For the integral term, note that by the Cauchy-Schwarz inequality and Fubini we have that
\begin{equation}\label{integral term}
    \begin{split}
        \E\left[\left( \int_\uptau^{\uptau+\theta} L_NY_{s}^N (\phi)dr\right)^2 \right] &\leq \sqrt{\theta} \cdot\left( \E\left[\int_0^{T+\theta} \left(L_NY_{s}^N (\phi)\right)^2 \dd s\right]\right)^{\frac{1}{2}}\\
        & = \sqrt{\theta}\cdot \left(\int_0^{T+\theta} \E\left[ \left(L_NY_{s}^N(\phi)\right)^2\right]\dd s\right)^\frac{1}{2}.\\
    \end{split}
\end{equation}
In the proof of Lemma \ref{substitute M2} we have shown that  $\{L_NY_{s}^N(\phi):N\in\mathbb{N}\}$ is uniformly bounded in $L^4$, hence it is also uniformly bounded in $L^2$, i.e.
\begin{equation}\label{integral sup}
    C:=\sup_{N\in \mathbb{N}}\E\left[ \left(L_NY_{s}^N(\phi)\right)^2\right] <\infty.
\end{equation}
Combining (\ref{integral term}) and (\ref{integral sup}), we find that
\begin{equation}\label{integral limit}
    \lim_{\delta\to0} \limsup_{N\to\infty} \sup_{\substack{\uptau \in \mathscr{T}_T \\ \theta\leq \delta}}  \E\left[\left( \int_\uptau^{\uptau+\theta} L_NY_{s}^N (\phi)dr\right)^2 \right]  \leq \lim_{\delta \to 0} \sqrt{\delta CT} = 0.
\end{equation}

For the martingale, by the martingale property we have that  $$\E\left[\scrM_{\uptau}^{N,\phi}(Y^N)\scrM_{\uptau+\theta}^{N,\phi}(Y^N)\right]=\E\left[\left(\scrM_{\uptau}^{N,\phi}(Y^N)\right)^2\right],$$
hence we see that
\begin{equation*}
    \begin{split}
            \E\left[\left(\scrM^{N,\phi}_{\uptau}(Y^N) -\scrM_{\uptau+\theta}^{N,\phi}(Y^N)\right)^2\right] =\E\left[\left(\scrM_{\uptau+\theta}^{N,\phi}(Y^N)\right)^2 - \Big(\scrM_{\uptau}^{N,\phi}(Y^N)\Big)^2\right].
    \end{split}
\end{equation*}
Since $\E\left[\scrM_{0}^{N,\phi}(Y^N)\right] = 0$, we can use that
\begin{equation*}
    \E\left[\left(\scrM_{t}^{N,\phi}(Y^N)\right)^2\right] = \E\left[\int_0^t\Gamma_{s}^{N,\phi}(Y^N)\right]\dd s,
\end{equation*}
because $\int_0^t\Gamma_{s}^{N,\phi}(Y^N)\dd s$ is the quadratic variation of the process $\scrM_{t}^{N,\phi}(Y^N)$. Furthermore,  $\E\left[\left(\Gamma_{s}^{N,\phi}(Y^N)\right)^2\right]$ is uniformly bounded since $\Gamma_{s}^{N,\phi}(Y^N)$ converges in $L^2$, hence
\begin{equation*}
    \begin{split}
           \sup_{N\in\mathbb{N}} \E\left[\left(\scrM^{N,\phi}_{\uptau}(Y^N) -\scrM_{\uptau+\theta}^{N,\phi}(Y^N)\right)^2\right]
            &= \sup_{N\in\mathbb{N}} \E\left[\int_\uptau^{\uptau+\theta} \Gamma_{s}^{N,\phi}(Y^N)\right]\dd s, \\
            &\leq \sqrt{\theta} \cdot \left(  \int_0^{T+\theta} \sup_{N\in\mathbb{N}} \E\left[\left(\Gamma_{s}^{N,\phi}(Y^N)\right)^2\right]\dd s\right)^{\frac{1}{2}}<\infty,
                \end{split}
\end{equation*}
where we used Cauchy Schwarz in the second line. From this we can again conclude that
\begin{equation}\label{martingale limit}
     \lim_{\delta\to0} \limsup_{N\to\infty} \sup_{\substack{\uptau \in \mathscr{T}_T \\ \theta\leq \delta}}  \E\left[\left(\scrM^{N,\phi}_{\uptau}(Y^N) -\scrM_{\uptau+\theta}^{N,\phi}(Y^N)\right)^2\right]  = 0.
\end{equation}
Combining (\ref{integral limit}) and (\ref{martingale limit}) with (\ref{prob fluctuation}), we indeed find that (\ref{2}) holds.
\end{proof}

\subsection{Uniqueness of limits}\label{Uniq}
By the tightness, there exists a subsequence $N_k$ and a process $Y\in D([0,T]; (C_{c,S}^\infty)^*) $ such that $Y^{N_k}\to Y$ in distribution.
\begin{lemma}\label{continuity}
For each $\phi \in C_{c,S}^\infty$ we have that $t\mapsto Y_t(\phi)$ is a.s. continuous.
\end{lemma}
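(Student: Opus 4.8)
The plan is to reduce the statement to a one-dimensional question about real-valued càdlàg paths and then exploit the fact that the individual jumps of the fluctuation field are uniformly small. Since $Y^{N_k}\to Y$ in distribution in $D([0,T];(C_{c,S}^\infty)^*)$, and for each fixed $\phi$ the evaluation map $\omega\mapsto\langle\phi,\omega\rangle$ is continuous and linear from $(C_{c,S}^\infty)^*$ to $\mathbb{R}$, the induced map on Skorokhod path space is continuous; hence $Y^{N_k}(\phi)\to Y(\phi)$ in distribution in $D([0,T];\mathbb{R})$. It therefore suffices to show that the real-valued limit process $t\mapsto Y_t(\phi)$ has almost surely continuous paths, which by the standard theory of the Skorokhod topology will follow once we control the maximal jump of the approximating processes.

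First I would bound the jumps of $Y^N_\cdot(\phi)$ directly. A single transition of the configuration process $\{\eta_t\}$ moves exactly one particle, and so changes $Y^N_t(\phi)$ by one of three amounts: $\tfrac{1}{\sqrt N}\big(\phi(\tfrac{x\pm1}{N},\sigma)-\phi(\tfrac{x}{N},\sigma)\big)$ for a diffusive jump, $\tfrac{1}{\sqrt N}\big(\phi(\tfrac{x+\sigma}{N},\sigma)-\phi(\tfrac{x}{N},\sigma)\big)$ for an active jump, or $\tfrac{1}{\sqrt N}\big(\phi(\tfrac{x}{N},\sigma')-\phi(\tfrac{x}{N},\sigma)\big)$ for an internal flip. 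Because $\phi(\cdot,\sigma)\in C_c^\infty(\mathbb{R})$ and $S$ is finite, the first two are of order $N^{-3/2}$, while the flip term is bounded by $\tfrac{2\|\phi\|_\infty}{\sqrt N}$. Consequently
$$
\sup_{t\in[0,T]}\big|Y^N_t(\phi)-Y^N_{t^-}(\phi)\big|\le\frac{2\|\phi\|_\infty}{\sqrt N}\longrightarrow 0\qquad(N\to\infty),
$$
a purely deterministic bound.

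I would then invoke the standard criterion coupling tightness in $D$ with vanishing jumps. Having already established in Proposition \ref{tight} the tightness of $\{Y^N(\phi):N\in\mathbb{N}\}$ together with the Aldous-type estimate \ref{2}, the additional fact that the maximal jump size tends to zero upgrades this to \emph{$C$-tightness}: every subsequential limit is supported on $C([0,T];\mathbb{R})$ (see e.g.\ Billingsley, or Joffe and M\'etivier). Concretely, one may pass to the almost sure setting via Skorokhod's representation theorem, so that along the subsequence $Y^{N_k}(\phi)\to Y(\phi)$ almost surely in the $J_1$ topology; the deterministic lemma that a $J_1$-limit of càdlàg functions whose maximal jumps vanish is necessarily continuous then yields that $Y_\cdot(\phi)$ is continuous almost surely.

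The only genuinely delicate point is this last deterministic lemma linking $J_1$-convergence to the jump structure: one must argue that a jump of size $a>0$ in the limit would force jumps of size close to $a$ in $Y^{N_k}(\phi)$ for all large $k$ (via the time-change characterization of $J_1$-convergence), contradicting the uniform bound $2\|\phi\|_\infty/\sqrt{N_k}\to 0$. Everything else—the reduction to fixed $\phi$ and the elementary jump-size computation—is routine given the estimates already proved.
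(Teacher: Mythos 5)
Your proposal is correct and follows essentially the same route as the paper: both arguments combine the Aldous-type tightness estimate \ref{2} already established in Proposition \ref{tight} with the deterministic bound of order $\|\phi\|_\infty/\sqrt{N}$ on the maximal jump of $Y^N(\phi)$, which holds because a single transition moves exactly one particle. The only difference is packaging: the paper concludes via the explicit modulus inequality $w_\delta(X)\le 2w'_\delta(X)+\sup_t|X_t-X_{t^-}|$, whereas you invoke the standard C-tightness criterion together with the Skorokhod representation theorem and the deterministic $J_1$ lemma, which is the same argument in abstract form.
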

\begin{proof}
We define the following functions
\begin{equation*}
    w_\delta(X) = \sup_{|t-s|<\delta}|X_t-X_s|, \ \ \ \ \ w_\delta'(X) = \inf_{\substack{0=t_0<t_1<...<t_r=1\\t_i - t_{i-1}<\delta}}\ \max_{1\leq i \leq r}\  \sup_{t_{i-1}\leq s < t \leq t_i} |X_t - X_s|,
\end{equation*}
then we have the following inequality
\begin{equation}\label{w and w'}
    w_\delta(X) \leq 2w_\delta'(X) + \sup_{t} |X_t - X_{t^-}|.
\end{equation}
From \ref{2} it follows for all $\varepsilon>0$ and all $\sigma \in S$ we have that
\begin{equation}\label{bla}
    \lim_{\delta\to0}\limsup_{N\to\infty} \p(w'_\delta(Y^N(\phi))\geq \varepsilon)=0.
\end{equation}
Now note that
\begin{equation} \label{bli}
    \sup_{t} \left|Y_{t}^N(\phi) - Y_{t^-}^N(\phi)\right| \leq \sup_t \frac{1}{\sqrt{N}} \sum_{v \in V} \left|(\eta_t(v) - \eta_{t^-}(v))\phi(v)\right|\leq \frac{1}{\sqrt{N}} ||\phi||_\infty\to 0,
\end{equation}
where we used that there can be at most one jump between the times $t$ and $t^-$ for the second inequality.
Therefore, by combining \eqref{bla} and \eqref{bli} with \eqref{w and w'} we can conclude that
\begin{equation*}
    \lim_{\delta\to0}\limsup_{N\to\infty} \p(w_\delta(Y^N(\phi))\geq \varepsilon)=0.
\end{equation*}
Therefore we find that $t\mapsto Y_t(\phi)$ is a.s. continuous.
\end{proof}

Finally we show that $Y$ solves the martingale problem in \eqref{mart prob}.
\begin{proposition}
For every $\phi \in C_{c,S}^\infty$ the processes $\scrM_t^{\phi}(Y)$ and $\scrN_t^{\phi}(Y)$ defined in \eqref{mart prob} are martingales with respect to the filtration $\{\mathscr{F}_t: t\geq 0\}$ generated by $Y$.
\end{proposition}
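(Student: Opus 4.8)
The plan is to verify the martingale property through its defining orthogonality relation and to transport that relation from the prelimit processes $\scrM^{N,\phi}_t(Y^N)$ and $\scrN^{N,\phi}_t(Y^N)$, which are genuine martingales by construction in \eqref{dynk mart}, to the limit $Y$ obtained along the subsequence $N_k$ from the tightness in Proposition \ref{tight}. Concretely, to show that $\mathscr{M}^{\phi}_t(Y)$ is a martingale for $\{\mathscr{F}_t\}$ it suffices to check that for every $0\le s<t\le T$ and every bounded continuous functional of the form $G(Y)=\prod_{j=1}^{k}h_j\big(Y_{s_j}(\phi_j)\big)$ with $s_j\le s$, $\phi_j\in C_{c,S}^\infty$ and $h_j$ bounded continuous, one has
\[
\E\big[\big(\mathscr{M}^{\phi}_t(Y)-\mathscr{M}^{\phi}_s(Y)\big)\,G(Y)\big]=0,
\]
since such functionals generate $\mathscr{F}_s$. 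For each fixed $N$ the analogous identity with $\scrM^{N,\phi}$ and $Y^N$ holds exactly, because $\scrM^{N,\phi}_t(Y^N)$ is an $\mathscr{F}^N_t$-martingale and $G(Y^N)$ is $\mathscr{F}^N_s$-measurable.

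Next I would replace the Dynkin martingale by the limiting functional at finite $N$. By Proposition \ref{substitute M}, $\scrM^{N,\phi}_t(Y^N)-\mathscr{M}^{\phi}_t(Y^N)\to 0$ in $L^2$ for each fixed $t$; since $G$ is bounded, this yields
\[
\lim_{N\to\infty}\E\big[\big(\mathscr{M}^{\phi}_t(Y^N)-\mathscr{M}^{\phi}_s(Y^N)\big)\,G(Y^N)\big]=0.
\]
It then remains to pass this identity to the weak limit along $N_k$, for which I would invoke the continuous mapping theorem together with a uniform integrability argument. The functional $Y\mapsto\big(\mathscr{M}^{\phi}_t(Y)-\mathscr{M}^{\phi}_s(Y)\big)G(Y)$ on $D([0,T];(C_{c,S}^\infty)^*)$ is continuous at every path whose coordinates $r\mapsto Y_r(\phi)$ and $r\mapsto Y_r(\phi_j)$ are continuous at the times $s,t,s_1,\dots,s_k$, because in the Skorokhod topology the evaluation maps and the time-integral map $Y\mapsto\int_0^r Y_u(A\phi)\dd u$ are continuous at continuous paths. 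By Lemma \ref{continuity} the limit has a.s. continuous trajectories, so these times are almost surely continuity points under the law of $Y$, and the continuous mapping theorem applies.

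To upgrade convergence in distribution to convergence of expectations I would use the uniform $L^4$-bounds established in Lemma \ref{substitute M2} (e.g. \eqref{uniform Y}), which control $\E[\mathscr{M}^{\phi}_t(Y^N)^4]$ uniformly in $N$ and hence make $\{(\mathscr{M}^{\phi}_t(Y^N)-\mathscr{M}^{\phi}_s(Y^N))G(Y^N)\}_N$ uniformly integrable. Combining the two displays gives the desired orthogonality identity for $Y$, so $\mathscr{M}^{\phi}_t(Y)$ is a martingale. For $\mathscr{N}^{\phi}_t(Y)$ I would run the identical scheme, starting from the martingale $\scrN^{N,\phi}_t(Y^N)$ and using Proposition \ref{substitute N} for the $L^2$-substitution $\scrN^{N,\phi}_t(Y^N)-\mathscr{N}^{\phi}_t(Y^N)\to 0$; the only additional point is that $\mathscr{N}^{\phi}_t$ is quadratic in $\mathscr{M}^{\phi}_t$, so the uniform integrability step now requires control of $\E[\mathscr{M}^{\phi}_t(Y^N)^4]$, which is again exactly what Lemma \ref{substitute M2} supplies.

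The main obstacle is precisely this interchange of weak limit and expectation: the evaluation and time-integration functionals that define $\mathscr{M}^{\phi}$ and $\mathscr{N}^{\phi}$ are \emph{not} continuous on all of Skorokhod space, so the argument genuinely rests on two inputs — the a.s. continuity of the limit from Lemma \ref{continuity}, which places the relevant times at continuity points and legitimizes the continuous mapping theorem, and the uniform higher-moment bounds, which turn distributional convergence into convergence of the pertinent expectations. Once both martingale identities are verified, $Y$ solves the martingale problem \eqref{mart prob}, which together with its uniqueness (proved in Section \ref{Uniq}) identifies the limit and completes the proof of Theorem \ref{fluctuations}.
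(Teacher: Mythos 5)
Your proposal follows the paper's own proof almost step for step: the same characterization of the martingale property by orthogonality against bounded cylinder functionals of the path up to time $s$, the same use of the Dynkin martingales \eqref{dynk mart} at finite $N$, the same $L^2$-substitution via Propositions \ref{substitute M} and \ref{substitute N}, the same uniform integrability supplied by the $L^4$-bounds of Lemma \ref{substitute M2}, and the same appeal to the a.s.\ continuity of $t\mapsto Y_t(\phi)$ from Lemma \ref{continuity}.

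The one place where your argument is not justified as written is the passage to the limit: you invoke the continuous mapping theorem for the functional $X\mapsto\big(\scrM_t^{\phi}(X)-\scrM_s^{\phi}(X)\big)G(X)$ on $D([0,T];(C_{c,S}^\infty)^*)$. The continuous mapping theorem (and the Portmanteau theorem behind it) is a statement about random elements of metrizable spaces, while $(C_{c,S}^\infty)^*$ --- and hence the path space --- is not metrizable; this is precisely the obstruction the paper points out (``we cannot directly use this'') before invoking the method of \cite[Proposition 5.2]{Ginkel}, which circumvents non-metrizability by exploiting the continuity of the real-valued processes $t\mapsto Y_t(\phi)$, i.e.\ by working at the level of $D([0,T];\mathbb{R})$, where the Skorokhod topology is metrizable and the standard weak-convergence theorems apply. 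Your two ingredients --- continuity of the evaluation and time-integral maps at continuous paths, and a.s.\ continuity of the limit --- are exactly what makes that workaround run, so the gap is repairable rather than fatal; but as stated, ``the continuous mapping theorem applies'' on the non-metrizable distribution-valued path space is an unjustified step, and a complete proof must either reproduce the argument of \cite[Proposition 5.2]{Ginkel} or first push the problem down to the (metrizable) Skorokhod space of the finitely many real-valued coordinate processes $Y^{N_k}(\phi), Y^{N_k}(\phi_1),\dots,Y^{N_k}(\phi_k)$ appearing in $\scrM^{\phi}$ and $G$, and only there apply continuous mapping together with your uniform integrability bound.
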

\begin{proof}
Fix arbitrary $n \in \mathbb{N}$, $s\geq 0$, $0\leq s_1 \leq ...\leq s_n\leq s$, $\psi_1, ..., \psi_n \in C_{c,S}^\infty$ and  $\Psi \in C_b(\mathbb{R}^n)$, and define the function $\mathcal{I}: D([0,T]; (C_{c,S}^\infty)^*) \to \mathbb{R}$ as
\begin{equation*}
    \caI(X) := \Psi\left(X_{s_1}(\psi_1), ..., X_{s_n}(\psi_n)\right).
\end{equation*}
To show that $\scrM_t^{\phi}(Y)$ and $\scrN_t^{\phi}(Y)$ are $\mathscr{F}_t$-martingales, it suffices to show that
\begin{equation*}
    \lim_{k\to\infty} \E\left[\scrM_t^{N_k,\phi}(Y^{N_k}) \caI(Y^{N_k})\right] = \E\left[\scrM_t^{\phi}(Y) \caI(Y)\right], \ \ \ \ \lim_{k\to\infty} \E\left[\scrN_t^{N_k,\phi}(Y^{N_k}) \caI(Y^{N_k})\right] = \E\left[\scrN_t^{\phi}(Y) \caI(Y)\right],
\end{equation*}
with $\scrM_t^{N,\phi}$ and $\scrN_t^{N,\phi}$ the Dynkin martingales defined in \eqref{dynk mart}. Namely, by the martingale property we  then have that
\begin{equation*}
    \E\left[\scrM_t^{\phi}(Y)\caI(Y)\right] = \lim_{k\to\infty} \E\left[\scrM_t^{N_k,\phi}(Y^{N_k})\caI(Y^{N_k})\right] =  \lim_{k\to\infty} \E\left[\scrM_s^{N_k,\phi}(Y^{N_k})\caI(Y^{N_k})\right]= \E\left[\scrM_s^\phi(Y)\caI(Y)\right],
\end{equation*}
and analogous for $\scrN_t^{\phi}(Y)$.

We start by proving  $\scrM_t^\phi(Y)$ is a martingale. First of all, note that from Proposition \ref{substitute M} we can conclude
\begin{equation*}
    \lim_{k\to\infty} \E\left[\scrM_t^{N_k,\phi}(Y^{N_k}) \caI(Y^{N_k})\right] = \lim_{k\to\infty} \E\left[\scrM_t^{\phi}(Y^{N_k}) \caI(Y^{N_k})\right].
\end{equation*}
Furthermore, in Lemma \ref{substitute M2} we have shown that  the process $\scrM_{t}^\phi(Y^N)$ is uniformly bounded in $L^4$, hence it is also uniformly bounded in $L^2$, therefore
\begin{equation*}
    \sup_{k\in\mathbb{N}} \E\left[\left|\scrM_t^{\phi}(Y^{N_k}) \caI(Y^{N_k})\right|^2\right] \leq ||\Psi||_\infty^2 \sup_{k \in \mathbb{N}} \sum_{\sigma \in S}  \E\left[\left(\scrM_{t}^{\phi}(Y^{N_k})\right)^2\right] <\infty.
\end{equation*}
This implies that we have uniform integrability of $\scrM_t^{\phi}(Y^{N_k}) \caI(Y^{N_k})$. It now suffices to show that $\scrM_t^{\phi}(Y^{N_k}) \caI(Y^{N_k})$ converges to $\scrM_t^{\phi}(Y) \caI(Y)$ in distribution. One usually concludes this using the  Portmanteau theorem, but because
the path space $D([0,T]; (C_{c,S}^\infty)^*_S)$ is not metrizable, we cannot directly use this.  Instead, using the exact same method as introduced in \cite[Proposition 5.2]{Ginkel}, one can work around the problem of non-metrizability via the continuity of $t\mapsto Y_t(\phi)$.

The proof that $\scrN_t^\phi(Y)$ is a martingale works in the same way. First we note that by Proposition \ref{substitute N} we have that
\begin{equation*}
    \lim_{k\to\infty}\E\left[\scrN_t^{N_k,\phi}(Y^{N_k})\caI(Y^{N_k})\right] = \lim_{k\to\infty} \E\left[\scrN_t^{\phi}(Y^{N_k})\caI(Y^{N_k})\right].
\end{equation*}
Therefore we only need to show that
\begin{equation}\label{scrN}
     \sup_{k\in\mathbb{N}} \E\left[\left|\scrN_t^{\phi}(Y^{N_k}) \caI(Y^{N_k})\right|^2\right] < \infty.
\end{equation}
Afterwards the convergence  of $\scrN_t^{\phi}(Y^{N_k}) \caI(Y^{N_k})$  to $\scrN_t^{\phi}(Y) \caI(Y)$ in distribution follows from the same arguments as above.

To see that \eqref{scrN} holds, note that
\begin{align*}
     \E\left[\left(\scrN_{t}^{\phi}(Y^{N_k})\right)^2\right] \leq 2 \E\left[\left(\scrM_{t}^{\phi}(Y^{N_k})\right)^4\right] + 8t^2\rho^2\left(\kappa \left<\left<\partial_x\phi,\partial_x\phi\right>\right> +\left<\left<\phi,\Sigma \phi\right>\right>\right)^2.
\end{align*}
In the proof of Lemma \ref{substitute M2}, we have already shown that $\E\left[\big(\scrM_{t}^{\phi}(Y^{N})\big)^4\right]$ is uniformly bounded in $N$, hence the result follows.
\end{proof}

 \nocite{casini}

\noindent\textbf{Acknowledgement }\ The authors would like to thank Christian Maes for helpful discussions. \\

\appendix
\section{Hydrodynamic limit}\label{appendix}
In this section we give the proof of the hydrodynamic limit, i.e., of Theorem \ref{weak solution}. We follow the standard methodology of \cite{timo}.
\subsection{Preliminary results}\label{5.2}
Before we start the proof of Theorem \ref{weak solution}, we first show the following lemma which, using duality, provides uniform upper bounds for the first and second moment of the expected particle number when starting from the local equilibrium distribution \eqref{loceq}.
\begin{lemma}\label{ineqs}
For all $N\in\mathbb{N}$, $t\geq0$ and $(x,\sigma) \in V$ we have that
\begin{equation}\label{ineq 1}
\E_{\mu_\rho^N} \left[ \eta_t^N(x,\sigma)\right] \leq ||\rho||_\infty,
\end{equation}
\begin{equation}\label{ineq 2}
\E_{\mu_\rho^N} \left[ \eta_t^N(x,\sigma)^2\right] \leq ||\rho||_\infty^2 + ||\rho||_\infty.
\end{equation}
\end{lemma}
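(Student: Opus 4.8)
Both inequalities should reduce, through duality, to estimates for one or two dual random walkers, after which the only probabilistic input is that the initial law $\mu_\rho^N$ is a product of Poisson measures whose means are pointwise bounded by $||\rho||_\infty$. The plan is therefore to convert each moment of $\eta_t^N(x,\sigma)$ into a dual-particle expectation and then bound every Poisson intensity by $||\rho||_\infty$.

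For \eqref{ineq 1} I would use the single-particle duality recalled in Section \ref{duality sec}. Starting the forward process from $\eta \sim \mu_\rho^N$ and applying Fubini,
\[
\E_{\mu_\rho^N}\!\left[\eta_t^N(x,\sigma)\right] = \int \widehat{\E}_{(x,\sigma)}\!\left[\eta(\widehat{X}_t,\widehat{\sigma}_t)\right]\dd\mu_\rho^N(\eta) = \widehat{\E}_{(x,\sigma)}\!\left[\rho\!\left(\tfrac{\widehat{X}_t}{N},\widehat{\sigma}_t\right)\right],
\]
where I used $\E_{\mu_\rho^N}[\eta(y,\sigma')] = \rho(y/N,\sigma')$. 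Since a single dual particle carries total mass one and $\rho \le ||\rho||_\infty$ pointwise, the right-hand side is at most $||\rho||_\infty$, which is \eqref{ineq 1}.

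For \eqref{ineq 2} I would split $\eta_t^N(x,\sigma)^2 = \eta_t^N(x,\sigma)\big(\eta_t^N(x,\sigma)-1\big) + \eta_t^N(x,\sigma)$ and treat the factorial term with the two-particle version of the duality, i.e. the dual configuration consisting of two particles initially placed at $(x,\sigma)$. As the particles are independent, the two dual particles evolve as independent walkers. Averaging the duality function against $\mu_\rho^N$ and using Fubini, the contribution of a pair of distinct dual sites factorizes into a product $\rho(\cdot)\rho(\cdot)$ by independence of the Poisson coordinates, while the diagonal contribution, where the two dual walkers coincide at a site $w$, produces the Poisson factorial moment $\E_{\mu_\rho^N}[\eta(w)(\eta(w)-1)] = \rho(w/N,\cdot)^2$. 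In every case this quantity is bounded by $||\rho||_\infty^2$, and since the dual particles together carry unit mass,
\[
\E_{\mu_\rho^N}\!\left[\eta_t^N(x,\sigma)\big(\eta_t^N(x,\sigma)-1\big)\right] \le ||\rho||_\infty^2 .
\]
Combining this with \eqref{ineq 1} gives \eqref{ineq 2}.

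The delicate point is precisely the diagonal term in the two-particle computation: the coincidence of the two dual walkers has to be matched with the \emph{second factorial moment} of a single Poisson coordinate rather than with an independent product, since otherwise one either double counts or loses the sharp constant $||\rho||_\infty^2 + ||\rho||_\infty$. A clean alternative that bypasses this bookkeeping is the Poisson displacement theorem: because the walkers move independently and $\eta_0^N \sim \mu_\rho^N$ is a product of Poisson measures, $\eta_t^N$ is again a product of Poisson measures, with mean $m_t(x,\sigma) := \E_{\mu_\rho^N}[\eta_t^N(x,\sigma)] \le ||\rho||_\infty$ by \eqref{ineq 1}; then $\E_{\mu_\rho^N}[\eta_t^N(x,\sigma)^2] = m_t(x,\sigma)^2 + m_t(x,\sigma) \le ||\rho||_\infty^2 + ||\rho||_\infty$ at once. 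I would keep duality for \eqref{ineq 1} and use whichever of the two routes is cleaner for \eqref{ineq 2}.
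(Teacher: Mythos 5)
Your proposal is correct and takes essentially the same route as the paper: single\mbox{-}particle duality plus Fubini for \eqref{ineq 1}, and for \eqref{ineq 2} the decomposition $\eta_t^N(x,\sigma)^2 = \eta_t^N(x,\sigma)\big(\eta_t^N(x,\sigma)-1\big) + \eta_t^N(x,\sigma)$ handled by two independent dual walkers started at $(x,\sigma)$, with the Poisson second factorial moment $\E_{\mu_\rho^N}[\eta(w)(\eta(w)-1)] = \rho(\tfrac{w}{N},\cdot)^2$ taking care of the diagonal exactly as in the paper's computation, yielding the bound $||\rho||_\infty^2 + ||\rho||_\infty$. The alternative you sketch via preservation of Poisson product measures under independent dynamics is also valid, but the duality argument is the one the paper uses.
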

\begin{proof}
For the first inequality, note that by duality we have that
\begin{align*}
\E_{\mu_\rho^N} \left[ \eta_t^N(x,\sigma)\right]
&=  \int \E_{\eta^N} \left[D(\delta_{(x,\sigma)}, \eta_t^N) \right] \dd\mu_\rho^N(\eta^N)\\
&=\int \widehat{\E}_{(x,\sigma)} \left[D(\delta_{(\widehat{X}_t,\widehat{\sigma}_t)}, \eta^N) \right] \dd\mu_\rho^N(\eta^N)\\
&=\widehat{\E}_{(x,\sigma)} \left[\rho(\tfrac{\widehat{X}_t}{N},\widehat{\sigma}_t) \right] \leq ||\rho||_\infty.
\end{align*}
Similarly for the second inequality, we have that
\begin{align*}
\E_{\mu_\rho^N} \left[ \eta_t^N(x,\sigma)^2\right]
&= \int \E_{\eta^N} \left[D(2\delta_{(x,\sigma)}, \eta_t^N) + D(\delta_{(x,\sigma)},\eta_t^N)\right]\dd\mu_\rho^N(\eta^N) \\
&= \int \widehat{\E}_{(x,\sigma),(x,\sigma)}\left[D(\delta_{\widehat{X}^{(1)}_t,\widehat{\sigma}^{(1)}_t)} + \delta_{(\widehat{X}^{(2)}_t,\widehat{\sigma}^{(2)}_t)}, \eta^N) + D(\delta_{(\widehat{X}^{(1)}_t,\widehat{\sigma}^{(1)}_t)},\eta^N)\right] \dd\mu_\rho^N(\eta^N)\\
&=\widehat{\E}_{(x,\sigma),(x,\sigma)}\left[\rho(\tfrac{\widehat{X}^{(1)}_t}{N},\widehat{\sigma}^{(1)}_t) \rho(\tfrac{\widehat{X}^{(2)}_t}{N},\widehat{\sigma}^{(2)}_t)  + \rho(\tfrac{\widehat{X}^{(1)}_t}{N},\widehat{\sigma}^{(1)}_t)  \right] \leq||\rho||_\infty^2 + ||\rho||_\infty.
\end{align*}
\end{proof}
Now we will define the processes $ \scrM^{\phi}_t(\pi^N)$ and $ \scrM^{N, \phi}_t(\pi^N)$ the same way as in \eqref{mart prob} and \eqref{dynk mart} respectively. We will again show that we can exchange these processes in the limit.
\begin{proposition}\label{prop dynkin hydro}
For all $t\geq0$ and $\phi \in C_{c,S}^\infty$, we have that
\[
\lim_{N\to\infty} \E\left[\left| \scrM^{\phi}_t(\pi^N) - \scrM^{N,\phi}_t(\pi^N)\right|\right] = 0.
\]
\end{proposition}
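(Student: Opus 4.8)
The plan is to reduce the difference of the two processes to a single time-integral of the empirical measure tested against the Taylor remainder $(\mathscr{L}_N - A)\phi$, and then to bound its $L^1$ norm using the uniform first-moment estimate of Lemma \ref{ineqs}. Recalling that $\scrM^{\phi}_t(\pi^N)$ and $\scrM^{N,\phi}_t(\pi^N)$ are built exactly as in \eqref{mart prob} and \eqref{dynk mart}, they differ only in their drift terms, the former subtracting $\int_0^t \pi^N_s(A\phi)\,\dd s$ and the latter $\int_0^t L_N\pi^N_s(\phi)\,\dd s$. My first step is to observe that, since $\pi^N(\phi)$ is a linear functional of $\eta$, the many-particle generator acting on it transfers onto the test function exactly as in the computation of $L_N Y^N_s(\phi)$ in the proof of Proposition \ref{substitute M}:
\[
L_N \pi^N_s(\phi) = \frac{1}{N}\sum_{(x,\sigma)\in V} \eta^N_s(x,\sigma)\,(\mathscr{L}_N\phi)(\tfrac{x}{N},\sigma) = \pi^N_s(\mathscr{L}_N\phi).
\]
Consequently
\[
\scrM^{\phi}_t(\pi^N) - \scrM^{N,\phi}_t(\pi^N) = \int_0^t \pi^N_s\big((\mathscr{L}_N - A)\phi\big)\,\dd s,
\]
and it remains to show that the expected absolute value of this integral vanishes as $N\to\infty$.

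The second step is to quantify the Taylor remainder. A Taylor expansion with Lagrange remainder, identical to the one producing the error term $R_1$ in Proposition \ref{substitute M}, gives the uniform bound
\[
\big\|(\mathscr{L}_N - A)\phi\big\|_\infty \leq \frac{1}{N}\big(\kappa\|\partial_{xxx}\phi\|_\infty + \lambda\sigma^2\|\partial_{xx}\phi\|_\infty\big) =: \frac{C(\phi)}{N},
\]
where the leading contribution comes from the first-order-in-$1/N$ correction of the active jump term and the symmetric diffusive difference contributes only at order $1/N^2$ (here $\sigma^2 \le \max_{\sigma\in S}\sigma^2$ is bounded since $S$ is finite). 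Because $\phi$ is compactly supported, $(\mathscr{L}_N - A)\phi$ is supported, in the $x$-variable, within an $\mathcal{O}(1/N)$-neighborhood of $\mathrm{supp}\,\phi$, so that the relevant index set $V_\phi^N$ still has cardinality $|V_\phi^N| = \mathcal{O}(N)$. Using $\eta^N_s \geq 0$ we then obtain
\[
\big|\pi^N_s\big((\mathscr{L}_N - A)\phi\big)\big| \leq \frac{C(\phi)}{N^2}\sum_{(x,\sigma)\in V_\phi^N} \eta^N_s(x,\sigma).
\]

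The third step is where the setting genuinely departs from the stationary analysis of Section \ref{section 4}: here the process starts from the local equilibrium $\mu_\rho^N$ of \eqref{loceq}, which is \emph{not} invariant, so I cannot compute moments exactly and must instead invoke the duality-based first-moment bound $\E_{\mu_\rho^N}[\eta^N_s(x,\sigma)] \leq \|\rho\|_\infty$ from Lemma \ref{ineqs}. Taking expectations above and summing over $V_\phi^N$ yields
\[
\E\big[\big|\pi^N_s\big((\mathscr{L}_N - A)\phi\big)\big|\big] \leq \frac{C(\phi)}{N^2}\,|V_\phi^N|\,\|\rho\|_\infty = \mathcal{O}(1/N),
\]
a bound uniform in $s\in[0,t]$. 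Integrating in time and applying Fubini then gives $\E\big[\big|\scrM^{\phi}_t(\pi^N) - \scrM^{N,\phi}_t(\pi^N)\big|\big] \leq t\cdot\mathcal{O}(1/N) \to 0$, which is the claim. I expect the only delicate point to be precisely the substitution, out of stationarity, of the exact covariance identities used in Section \ref{section 4} by the uniform moment bound of Lemma \ref{ineqs}; once that input is in place the estimate is routine, the factor $1/N$ from the empirical normalization combining with the $1/N$ Taylor remainder and the $\mathcal{O}(N)$ size of the support to yield the decay.
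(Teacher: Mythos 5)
Your proof is correct and follows essentially the same route as the paper: both identify the difference $\scrM^{\phi}_t(\pi^N)-\scrM^{N,\phi}_t(\pi^N)$ with the time integral of the Taylor remainder $\pi^N_s\big((\mathscr{L}_N-A)\phi\big)$ (the paper's $R_3$ term), bound it per site by $\mathcal{O}(N^{-2})$ times the occupation numbers over $V_\phi^N$, and conclude via the duality-based first-moment bound \eqref{ineq 1} of Lemma \ref{ineqs} together with $|V_\phi^N|=\mathcal{O}(N)$. Your explicit remarks on the support enlargement and on which term dominates the Taylor error are refinements the paper leaves implicit, but the argument is the same.
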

\begin{proof}
Through similar calculations as in the proof of Proposition \ref{substitute M}, we find that
\begin{equation}\label{Ln+R}  L_N\pi_{s}^N(\phi) = \pi_{ s}^N( A\phi)+ R_3(\phi, N, s).
\end{equation}
Here $R_3(\phi, N, s)$ is the error term of the Taylor approximations, which is bounded as follows
\begin{equation}\label{rest}
\left|R_3(\phi, N,s)\right| \leq  \frac{1}{N^2} \sum_{(x,\sigma) \in V_N} \eta_{s}^N(x,\sigma)\big(\kappa||\phi_{xxx}||_\infty  + \lambda\sigma^2||\phi_{xx}||_\infty\big),
\end{equation}
and so by \eqref{ineq 1}
\begin{align*}
\lim_{N\to\infty} \E\left[\left| \scrM^{\phi}_t(\pi^N) - \scrM^{N,\phi}_t(\pi^N)\right|\right]
&= \lim_{N\to\infty}  \int_0^t \E\big[\left|R_3(\phi, N,s)\right| \big]\dd s\\
&\leq \lim_{N\to\infty}  \frac{1}{N^2} t|V_N|\cdot  ||\rho||_\infty\big(\kappa ||\phi_{xxx}||_\infty  + \lambda \sigma^2||\phi_{xx}||_\infty\big)\\
&=0,
\end{align*}
which concludes the proof.
\end{proof}
Lastly we will prove that the martingale $ \scrM^{N,\phi}_t(\pi^N)$ actually vanishes in the limit.
\begin{lemma}\label{vanish martingale}
For any $\phi \in C_{c,S}^\infty$ we have that
$$\lim_{N\to\infty} \E\left[\sup_{t\in[0,T]} \left|\scrM_{t}^{N,\phi}(\pi^N)\right|^2\right] = 0.$$
\end{lemma}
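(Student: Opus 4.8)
The plan is to reduce the supremum to a single terminal second moment via Doob's $L^2$-inequality, and then to show that this second moment vanishes because the empirical measure $\pi^N$ carries an extra factor $1/N$ in its normalization that even the diffusive rate $\kappa N^2$ cannot compensate. Since $\{\scrM^{N,\phi}_t(\pi^N):t\geq 0\}$ is a square-integrable martingale with $\scrM^{N,\phi}_0(\pi^N)=0$, and since $\scrN^{N,\phi}_t(\pi^N)=\scrM^{N,\phi}_t(\pi^N)^2-\int_0^t\Gamma^{N,\phi}_s(\pi^N)\dd s$ is a martingale, Doob's maximal inequality gives
\begin{equation*}
\E\left[\sup_{t\in[0,T]}\left|\scrM^{N,\phi}_t(\pi^N)\right|^2\right] \leq 4\,\E\left[\left|\scrM^{N,\phi}_T(\pi^N)\right|^2\right] = 4\,\E\left[\int_0^T \Gamma^{N,\phi}_s(\pi^N)\,\dd s\right],
\end{equation*}
where $\Gamma^{N,\phi}_s(\pi^N)$ is the carr\'e du champ operator associated with $\pi^N_s(\phi)$, defined exactly as in \eqref{Gamma} but with $Y^N$ replaced by $\pi^N$. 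By Fubini it then suffices to prove that $\E[\Gamma^{N,\phi}_s(\pi^N)]\to 0$ uniformly in $s\in[0,T]$.

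The next step is to compute $\Gamma^{N,\phi}_s(\pi^N)$ from the transition-rate formula \eqref{deriv}. A diffusive jump of a particle from $(x,\sigma)$ changes $\pi^N_s(\phi)$ by $\tfrac1N\big(\phi(\tfrac{x\pm1}{N},\sigma)-\phi(\tfrac xN,\sigma)\big)$, an active jump by $\tfrac1N\big(\phi(\tfrac{x+\sigma}{N},\sigma)-\phi(\tfrac xN,\sigma)\big)$, and an internal flip by $\tfrac1N\big(\phi(\tfrac xN,\sigma')-\phi(\tfrac xN,\sigma)\big)$. Squaring these increments and weighting by the corresponding rates produces three contributions whose orders in $N$ are straightforward to track using a first-order Taylor expansion, which makes each discrete \emph{spatial} gradient $\mathcal{O}(1/N)$: the diffusive term is of order $\kappa N^2\cdot N^{-2}\cdot N^{-2}=N^{-2}$ per site, the active term of order $\lambda N\cdot N^{-2}\cdot N^{-2}=N^{-3}$ per site, and the flip term — where the increment of $\phi$ between layers is merely $\mathcal{O}(1)$ — of order $N^{-2}$ per site.

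Finally I would take expectations. Each summand carries a factor $\eta^N_s(x,\sigma)$, and by Lemma \ref{ineqs} we have $\E_{\mu_\rho^N}[\eta^N_s(x,\sigma)]\leq ||\rho||_\infty$; since $\phi$ is compactly supported, only $\mathcal{O}(N)$ sites contribute to each sum. Hence each of the three pieces of $\E[\Gamma^{N,\phi}_s(\pi^N)]$ is at most $\mathcal{O}(N)\cdot\mathcal{O}(N^{-2})=\mathcal{O}(1/N)$, uniformly in $s\in[0,T]$, so that $\E[\int_0^T\Gamma^{N,\phi}_s(\pi^N)\dd s]\leq T\cdot\mathcal{O}(1/N)\to 0$, and the Doob bound above yields the claim. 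The one point requiring genuine care is the diffusive term: although its rate $\kappa N^2$ is the largest in the generator, the product of the empirical normalization $1/N$ with the $\mathcal{O}(1/N)$ discrete gradient of $\phi$ supplies the two extra powers of $N$ needed to beat the rate. This cancellation is exactly the diffusive scaling, and it is what forces the limiting hydrodynamic equation to be deterministic (i.e.\ the martingale part to vanish), in contrast with the fluctuation field of \eqref{Gamma}, whose $1/\sqrt N$ normalization leaves the analogous carr\'e du champ of order one.
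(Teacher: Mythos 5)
Your proposal is correct and follows essentially the same route as the paper's proof: Doob's $L^2$-maximal inequality, identification of $\E\big[\scrM_T^{N,\phi}(\pi^N)^2\big]$ with the expected quadratic variation $\E\big[\int_0^T\Gamma_s^{N,\phi}(\pi^N)\dd s\big]$, and the order-counting of the carr\'e du champ showing each contribution is $\mathcal{O}(1/N)$ after using the moment bound \eqref{ineq 1} and the fact that only $\mathcal{O}(N)$ sites carry the compactly supported $\phi$. The only cosmetic difference is that you make the per-site power counting explicit and uniform in $s$, whereas the paper invokes dominated convergence after citing the computation leading to \eqref{gamma 1 deriv}; the content is identical.
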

\begin{proof}
First of all, by Doob's maximal inequality, we have that
$$ \E\left[\sup_{t\in[0,T]} \left|\scrM_{t}^{N,\phi}(\pi^N)\right|^2\right] \leq 4\E\left[\left(\scrM_{T}^{N,\phi}(\pi^N)\right)^2\right].$$
 Since $\scrM_{t}^{N,\phi}(\pi^N)$ is a mean-zero martingale, this expectation is equal to the expectation of the quadratic variation of  $ \scrM_{t}^{N,\phi}(\pi^N)$, i.e.,
\[
\E\left[\left( \scrM_{T}^{N,\phi}(\pi^N)\right)^2\right] = \E\left[\int_0^T \Gamma_{s}^{N,\phi}(\pi^N)\dd s\right]= \int_0^T \E\left[\Gamma_{s}^{N,\phi}(\pi^N)\right]\dd s,
\]
where $\Gamma_{s}^{N,\phi}$ is as defined in \eqref{Gamma}. By using the same calculations to get \eqref{gamma 1 deriv} we find that
 \begin{align*}
 \Gamma_{s}^{N,\phi}(\pi^N)
         &= \frac{2\kappa}{N^2} \sum_{(x,\sigma) \in V}\eta^N_s(x,\sigma) (\partial_x\phi(\tfrac{x}{N},\sigma))^2  +\frac{1}{N^2} \sum_{(x,\sigma)\in V}\sum_{\sigma'\in S} c(\sigma,\sigma')\eta^N_s(x,\sigma)(\phi(\tfrac{x}{N},\sigma')-\phi(\tfrac{x}{N},\sigma))^2\\
         &\ \ \ \ \ \ \ \  + R_4(\phi,s,N,\sigma),
\end{align*}
with $R_4(\phi,s,N)$ bounded as follows
\ben
|R_4(\phi,s,N)| \leq \kappa\frac{1}{N^4}\sum_{(x,\sigma)\in V_N} \eta^N_s(x,\sigma)(\kappa||\phi_{xx}||_\infty + \lambda \sigma ||\phi_{x}||_\infty).
\een
By dominated convergence and \eqref{ineq 1} we can then conclude that
$$\lim_{N\to\infty} \E\left[ \left( \scrM_{T}^{N,\phi}(\pi^N)\right)^2\right] = \lim_{N\to\infty} \int_0^T \E \left[\Gamma_{s}^{N,\phi}(\pi^N)\right]\dd s = 0,$$
and the result follows.
\end{proof}

\subsection{Tightness}\label{Tightness}
We now prove the tightness result for the sequence $\{\pi^N: N \in \mathbb{N}\}$.
\begin{proposition}\label{tight probs}
$\{\pi^N: N \in \mathbb{N}\}$ is tight in $D( [0,T];\textbf{\emph{M}})$.
\end{proposition}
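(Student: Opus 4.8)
The plan is to follow the same two-step scheme used for the fluctuation field in Proposition \ref{tight}: first reduce the tightness of the measure-valued processes to the tightness of the real-valued processes $\{\pi^N(\phi):N\in\mathbb{N}\}$ for each fixed test function $\phi$, and then verify Aldous' criterion for each such $\phi$. Since $C_{c,S}^\infty$ is nuclear, Mitoma's theorem \cite[Theorem 4.1]{Mitoma} reduces tightness of $\{\pi^N\}$ to the tightness of $\{\pi^N(\phi)\}$ in $D([0,T];\mathbb{R})$ for every $\phi\in C_{c,S}^\infty$; the positivity of the empirical measures together with the uniform mass bound below is what will guarantee that the subsequential limits are genuine Radon measures in $\mathbf{M}$ rather than arbitrary distributions.

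To verify Aldous' two conditions for a fixed $\phi$ I would reuse the ingredients already assembled in this appendix. For the compact-containment condition, Markov's inequality and the first-moment bound \eqref{ineq 1} give
\[
\E_{\mu_\rho^N}\big[|\pi_t^N(\phi)|\big] \le \frac{1}{N}\sum_{(x,\sigma)\in V_\phi^N} \E_{\mu_\rho^N}[\eta_t^N(x,\sigma)]\,|\phi(\tfrac{x}{N},\sigma)| \le \frac{|V_\phi^N|}{N}\,\|\rho\|_\infty\|\phi\|_\infty = \mathcal{O}(1),
\]
uniformly in $N$ and $t$, since $|V_\phi^N| = \mathcal{O}(N)$; hence $\sup_N\p(|\pi_t^N(\phi)|>K)\to0$ as $K\to\infty$, which is the required compact containment in $\mathbb{R}$.

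For the oscillation condition I would use the Dynkin decomposition
\[
\pi_{\uptau+\theta}^N(\phi) - \pi_{\uptau}^N(\phi) = \big(\scrM_{\uptau+\theta}^{N,\phi}(\pi^N) - \scrM_{\uptau}^{N,\phi}(\pi^N)\big) + \int_{\uptau}^{\uptau+\theta} L_N\pi_s^N(\phi)\,\dd s
\]
for a stopping time $\uptau\in\mathscr{T}_T$ and $\theta\le\delta$. The martingale increment is controlled directly by Lemma \ref{vanish martingale}: since $\E[\sup_{t}|\scrM_t^{N,\phi}(\pi^N)|^2]\to0$, its contribution vanishes as $N\to\infty$ uniformly in $\uptau$ and $\theta$. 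For the drift integral, Proposition \ref{prop dynkin hydro} gives $L_N\pi_s^N(\phi)=\pi_s^N(A\phi)+R_3(\phi,N,s)$, and the first-moment bound \eqref{ineq 1} again yields $\sup_{N,s}\E[|L_N\pi_s^N(\phi)|]\le C<\infty$, so that $\E\big[\big|\int_{\uptau}^{\uptau+\theta}L_N\pi_s^N(\phi)\,\dd s\big|\big]\le C\theta\le C\delta$. Combining these two estimates through Markov's inequality gives
\[
\lim_{\delta\to0}\ \limsup_{N\to\infty}\ \sup_{\substack{\uptau\in\mathscr{T}_T\\ \theta\le\delta}}\p\big(|\pi_{\uptau}^N(\phi)-\pi_{\uptau+\theta}^N(\phi)|>\varepsilon\big)=0,
\]
which is Aldous' second condition.

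Because the two nontrivial estimates — the vanishing of the martingale part (Lemma \ref{vanish martingale}) and the uniform moment bounds (Lemma \ref{ineqs}) — are already in place, the argument is essentially a matter of assembling them within the Mitoma--Aldous framework, and I expect no serious analytic obstacle in the real-valued reduction itself. The one genuinely delicate point, and the step I would treat most carefully, is the passage from tightness in the distribution space $(C_{c,S}^\infty)^*$, which is what Mitoma directly provides, to tightness in $\mathbf{M}$ equipped with the vague topology: here one must check that the uniform control of the mass on compact sets coming from \eqref{ineq 1} forces every subsequential limit to be a Radon measure and that the relevant topologies agree on the compacts in play, so that the subsequent identification of limits via the equation \eqref{pi} indeed takes place in $\mathbf{M}$.
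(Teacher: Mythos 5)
Your route is genuinely different from the paper's, and it has a real gap at exactly the point you flag but do not fill. Mitoma's theorem gives tightness in $D([0,T];(C_{c,S}^\infty)^*)$, whereas the proposition asserts tightness in $D([0,T];\textbf{M})$ with the vague topology, and the passage between the two is not a routine check: a compact set of distribution-valued paths does not yield a compact set in $D([0,T];\textbf{M})$, and compact containment of the scalars $\pi_t^N(\phi)$ for each fixed $\phi$ (your condition \ref{1}) is strictly weaker than compact containment in $\textbf{M}$, which requires uniform control of the masses $\pi_t^N([-k,k]\times S)$ at \emph{all} spatial scales $k$ simultaneously. This is why the paper never invokes Mitoma for $\pi^N$: it works directly in $\textbf{M}$, exhibiting the explicit compacts $K_C=\{\mu\in\textbf{M}:\mu([-k,k]\times S)\le C(2k+1)k^2\ \text{for all } k\}$ (compact by \cite[Proposition A.25]{timo}), bounding $\p(\pi_t^N\notin K_C)$ by summing the first-moment bound \eqref{ineq 1} over $k$ against the convergent series $\sum_k k^{-2}$, and then proving a modulus-of-continuity estimate for the metric $d$ on $\textbf{M}$ built from a countable family of test functions. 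Your per-test-function oscillation estimates correspond to the second half of the paper's proof, but without an $\textbf{M}$-valued compact containment step your argument establishes tightness in a different space; the transfer you sketch (positivity of the limits, Riesz representation, agreement of the topologies on sets with uniform local mass bounds, and the lift of all this to the Skorokhod path spaces) is precisely the missing content, and carrying it out is at least as much work as the paper's direct construction.

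There is also a technical flaw in your drift estimate. The claimed bound $\E\big[\big|\int_{\uptau}^{\uptau+\theta}L_N\pi_s^N(\phi)\,\dd s\big|\big]\le C\theta$ does not follow from $\sup_{N,s}\E[|L_N\pi_s^N(\phi)|]\le C$: since $\uptau$ is a stopping time, the indicator of the random interval $[\uptau,\uptau+\theta]$ is correlated with the integrand, and writing $\int_{\uptau}^{\uptau+\theta}|f_s|\dd s=\int_0^{T+\delta}|f_s|\mathbf{1}_{[\uptau,\uptau+\theta]}(s)\dd s$ and bounding the indicator by $1$ loses the factor $\theta$. The repair is the one the paper uses in \eqref{integral term} and \eqref{abc}: bound pathwise by Cauchy--Schwarz, $\big|\int_{\uptau}^{\uptau+\theta}f_s\,\dd s\big|^2\le\theta\int_0^{T+\delta}f_s^2\,\dd s$, and control the second moments, which is exactly where the bound \eqref{ineq 2} of Lemma \ref{ineqs} enters. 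Your treatment of the martingale increment, by contrast, is correct: bounding $|\scrM^{N,\phi}_{\uptau+\theta}(\pi^N)-\scrM^{N,\phi}_{\uptau}(\pi^N)|$ by $2\sup_{t\le T+\delta}|\scrM^{N,\phi}_{t}(\pi^N)|$ and invoking Lemma \ref{vanish martingale} is clean and uniform over stopping times.
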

\begin{proof}
In the space $D( [0,T];\textbf{M})$ we can prove tightness by showing that the following two assertions hold.
\begin{enumerate}[label=\textbf{B.\arabic*}]
    \item \label{D1}For all $t\in[0,T]$ and $\varepsilon>0$ there exists a compact $K(t,\varepsilon) \subset \textbf{M}$ such that
    \begin{equation*}
        \sup_{N\in \mathbb{N}} \mathbb{P}\big(\pi_t^N \notin K(t,\varepsilon)\big) \leq \varepsilon.
    \end{equation*}
    \item \label{D2} For all $\varepsilon>0$
    \begin{equation*}
       \lim_{\delta \to 0} \limsup_{N\to\infty} \mathbb{P}\big(\omega(\pi^N,\delta)\geq \varepsilon\big) = 0,
    \end{equation*}
   where
$$\omega(\alpha, \delta) =\sup\{d \big(\alpha_s,\alpha_t\big) : s,t \in [0,T], |t-s|<\delta\},$$
and $d$ is the metric on  $\textbf{M}$ given by
\[
 d \big(\alpha,\beta \big)=\sum_{j=1}^\infty 2^{-j}\left(1 \wedge \left|\alpha (\phi_j) - \beta (\phi_j)\right|\right)
\]
for some specific choice of test functions $\phi_j \in C_{c,S}^\infty$.
\end{enumerate}
We start by proving \ref{D1}. Fix $\varepsilon>0$ and $t\geq 0$, and for some $C >0$ let $K_C$ be the following set
$$K_C = \left\{\mu \in \textbf{M}: \mu([-k,k]\times S) \leq C(2k+1)k^2 \text{ for all $k\in \mathbb{N}$}\right\}.$$
By\cite[Proposition A.25]{timo}, this is a compact set in $\textbf{M}$, and by Markov's inequality we now have that
\begin{align*}
\p(\pi_{t}^N([-k,k]\times S)\geq C(2k+1)k^2) &\leq \frac{1}{C(2k+1)k^2} \E\left[\pi_{t}^N([-k,k]\times S)\right]\\
&= \frac{1}{C(2k+1)k^2N} \sum_{(x,\sigma) \in [-kN,kN]\times S} \E\left[ \eta^N_{t}(x,\sigma)\right]\\
&\leq  \frac{1}{C(2k+1)k^2N} (2k+1)N|S|\cdot ||\rho||_\infty\\
&= \frac{1}{Ck^2}|S| \cdot  ||\rho||_\infty.
\end{align*}
Here we have used the inequality in \eqref{ineq 1}. Therefore
\begin{align*}
\p(\pi_{t}^N \notin K_C) \leq \sum_{k =1}^\infty \p(\pi_{t}^N([-k,k])\geq C(2k+1)k) \leq \frac{1}{C} |S|\cdot ||\rho||_\infty \sum_{k=1}^\infty \frac{1}{k^2}<\infty.
\end{align*}
By now taking $C$ big enough, we then have that for all $N\in\mathbb{N}$ that $\p(\pi_{t}^N \notin K_C) \leq\varepsilon$, which finishes the proof of \ref{D1}.

In order to prove that \ref{D2} holds, note first that
\begin{equation}\begin{split}\label{omega delta}
\omega(\pi^N, \delta)
& = \sup_{\substack{s,t \in [0,T]\\|t-s|<\delta}} \sum_{j=1}^\infty 2^{-j}\left(1 \wedge \left|\pi^N_{ t}(\phi_j) - \pi^N_{ s}(\phi_j)\right|\right)\\
& \leq 2^{-m} + \sum_{j=1}^m  \sup_{\substack{s,t \in [0,T]\\|t-s|<\delta}} 2^{-j}\left(1 \wedge \left|\pi^N_{ t}(\phi_j) - \pi^N_{ s}(\phi_j)\right|\right)\\
& \leq  2^{-m} + \sum_{j=1}^m  \sup_{\substack{s,t \in [0,T]\\|t-s|<\delta}}  \left|\pi^N_{ t}(\phi_j) - \pi^N_{ s}(\phi_j)\right|.
\end{split}\end{equation}
Here we have taken $m$ arbitrarily, so the first term can be made as small as we want. We now want to show that the expecation of the sum vanishes as we let $N\to \infty$ and $\delta \downarrow 0$. Afterwards, the claim can be shown by using the Markov inequality.

Note first that we have the following,
\begin{equation}\begin{split}\label{sup}
\E\left[  \sup_{\substack{s,t \in [0,T]\\|t-s|<\delta}}  \left|\pi^N_{t}(\phi_j) - \pi^N_{s}(\phi_j)\right|^2\right]
& = \E\left[ \sup_{\substack{s,t \in [0,T]\\|t-s|<\delta}} \left|\scrM_{t}^{N,\phi_j}(\pi^N) - \scrM_{s}^{N,\phi_j}(\pi^N) - \int_s^t L_N \pi_{r}^N(\phi_j)\dd r \right|^2\right] \\
&  \leq 4\E\left[ \sup_{t \in [0,T]} \left(\scrM_{t}^{N,\phi_j}(\pi^N)\right)^2\right] + 2\E\left[\sup_{\substack{s,t \in [0,T]\\|t-s|<\delta}} \left| \int_s^t L_N \pi_{r}^N(\phi_j)\dd r\right|^2\right].\\
\end{split}\end{equation}
 By Lemma \ref{vanish martingale}, the first term goes to zero as $N\to \infty$. For the second term, by filling in \eqref{Ln+R} we find that
\begin{equation}\begin{split}\label{abc}
\left| \int_s^t L_N \pi_{r}^N(\phi_j)\dd r\right|^2 &= \left( \int_s^t \left(\pi_{r}^N(A\phi_j) + R_3(\phi_j,N,r)\right) \dd r\right)^2\\
&\leq 2\left( \int_s^t \pi_{r}^N(A\phi_j) \dd r\right)^2 + 2\left(\int_s^t R_3(\phi_j,N,r,\sigma)\dd r\right)^2.
\end{split}\end{equation}
 By the upper bound on $R_3(\phi_j,N,r)$ in \eqref{rest} and by \eqref{ineq 1}, we can see that the last term vanishes in expectation when $N\to\infty$.
For the other term we have that
\begin{align*}
\left( \int_s^t \pi_{r}^N(A\phi_j) \dd r\right)^2 =\frac{1}{N^2} \left[ \int_s^t \sum_{(x,\sigma) \in V} \eta_r^N(x,\sigma)\cdot  (A\phi_j)(\tfrac{x}{N},\sigma)\dd r\right]^2.
\end{align*}
 Using that $|t-s|<\delta$ and applying H\"older a number of times, we find that
\begin{align*}
\left( \int_s^t \pi_{r}^N(A\phi_j) \dd r\right)^2 &\leq \frac{1}{N^2}  |V_{\phi_j}^N| \delta \cdot ||A\phi||_\infty \sum_{(x,\sigma) \in V_{\phi_j}^N} \int_0^T \left(\eta_{r}^N(x,\sigma) \right)^2\dd r.
\end{align*}
Using the inequality in \eqref{ineq 2}, we find that
\begin{align*}
\E\left[ \sup_{\substack{s,t \in [0,T]\\|t-s|<\delta}} \left( \int_s^t \pi_{r}^N(A\phi_j) \dd r\right)^2 \right]   \leq  \frac{1}{N^2}  |V_{\phi_j}^N|^2 \delta T \cdot ||A\phi||_\infty (||\rho||_\infty^2 + ||\rho||_\infty) = \mathcal{O}(\delta).
\end{align*}
Therefore
\begin{equation}\label{limsup}
\lim_{\delta \downarrow 0} \limsup_{N\to\infty} \E\left[  \sup_{\substack{s,t \in [0,T]\\|t-s|<\delta}}  \left|\pi^N_{t}(\phi_j) - \pi^N_{s}(\phi_j)\right|^2\right]= \lim_{\delta \downarrow 0} \limsup_{N\to\infty}\E\left[ \sup_{\substack{s,t \in [0,T]\\|t-s|<\delta}} \left( \int_s^t  \pi_{r}^N(A\phi_j) \dd r\right)^2 \right]=0.
\end{equation}
So, by going back to \eqref{omega delta} and using the Markov inequality, we get the following:
\begin{align*}
 \p( \omega(\pi^N, \delta)\geq \varepsilon) \leq \frac{1}{\varepsilon} \left(2^{-m}  + \sum_{j=1}^m \E \left[\sup_{\substack{s,t \in [0,T]\\|t-s|<\delta}}  \left|\pi^N_{ t}(\phi_j) - \pi^N_{ s}(\phi_j)\right|\right]\right).
\end{align*}
By now taking $m$ such that $2^{-m} < \varepsilon^2$ and using \eqref{limsup} we see that
\begin{align*}
 &\lim_{\delta\downarrow 0} \limsup_{N\to\infty} \p( \omega(\pi^N, \delta)\geq \varepsilon)< \varepsilon,
\end{align*}
which ultimately proves the tightness result.
\end{proof}

\subsection{Proof of hydrodynamic limit}\label{5.5}
Now we have everything needed to prove the result.
\begin{proof}[Proof of Theorem \ref*{weak solution}]\ \\
From  the tightness of the sequence $\{P^N : N \in \mathbb{N}\}$ we know that there exists a subsequence $\{P^{N_k}: k \in \mathbb{N}\}$ that converges weakly in the Skorokhod topology, i.e., $P^{N_k} \xrightarrow{w} P$ for some probability measure $P$ on $D([0,T];\textbf{M})$. If we can show that every convergent subsequence converges to the dirac measure $P=\delta_{\pi}$ with $\pi$ the unique continuous path solving \eqref{pi}, then the result follows.

First of all, by \ref{D2}, we immediately know that $P$ is concentrated on continuous paths in $D([0,T];\textbf{M})$. Now define for $\phi\in C_{c,S}^\infty$, $\varepsilon>0$  and $T>0$ the following set
$$H(\phi,\varepsilon):= \left\{\alpha \in D([0,T];\textbf{M}) : \sup_{t\in[0,T]} \left| \alpha_t(\phi) - \alpha_0(\phi) - \int_0^t\alpha_s(A\phi)\dd s\right|\leq\varepsilon\right\}.$$
Analogously as in \cite[Lemma 8.7]{timo} one can prove that this set is closed in the Skorokhod topology. Since the set $H(\phi,\varepsilon)$ is closed, we can apply the Portmanteau Theorem  to see that
\begin{align*}
P\big(H(\phi,\varepsilon)\big) &\geq \limsup_{k\to\infty} P^{N_k}\big(H(\phi,\varepsilon)\big)\\
&= \limsup_{k\to\infty} \p\left(\sup_{t\in[0,T]}\left| \pi^{N_k}_{t}(\phi) - \pi^{N_k}_{0}(\phi) - \int_0^t \pi^{N_k}_{s}(A\phi)\dd s \right|\leq \varepsilon \right)\\
&=\limsup_{k\to\infty} \p\left( \sup_{t\in[0,T]}\left| \scrM_{t}^{\phi}(\pi^{N_k})\right| \leq \varepsilon\right)\\
&=\limsup_{k\to\infty} \p\left( \sup_{t\in[0,T]}\left| \scrM_{t}^{N_k,\phi}(\pi^{N_k})\right|\leq \varepsilon\right).
\end{align*}
Here we have used Proposition \ref{prop dynkin hydro} for the last equality. By Lemma \ref{vanish martingale} and the Markov inequality we then have that
$$\p\left( \sup_{t\in[0,T]}\left| \scrM_{t}^{N_k,\phi}(\pi^{N_k})\right| >\varepsilon\right)\leq \frac{1}{\varepsilon^2} \E\left[\sup_{t\in[0,T]}\left|\scrM_{t}^{N_k,\phi}(\pi^{N_k})\right|^2\right] \to 0,$$
so $P\big(H(\phi,\varepsilon)\big)  = 1$. Since we took $\varepsilon>0$ arbitrarily, we indeed find that $P=\delta_{\pi}$.
\end{proof}
\end{document}